\documentclass[12pt]{article}
\usepackage[cp1251]{inputenc}
\usepackage{amsmath,amssymb}
\usepackage{amsthm}
\usepackage{mathrsfs}
\usepackage{tikz}
\usepackage{tikz-cd}
\usepackage{array,booktabs,float}
\usepackage{enumitem}
\usepackage{graphicx}
\usepackage{fancyhdr}

\theoremstyle{plain}
\newtheorem{theorem}{Theorem}[section]
\newtheorem{proposition}[theorem]{Proposition}
\newtheorem*{proposition*}{Proposition}
\newtheorem{lemma}[theorem]{Lemma}

\newtheorem*{syuteiri*}{Main Theorem}

\theoremstyle{definition}
\newtheorem{definition}[theorem]{Definition}

\newtheorem*{remark*}{Remark}

\newtheorem*{example*}{Example}

\newcommand{\R}{\mathbb{R}}
\newcommand{\C}{\mathbb{C}}
\newcommand{\abs}[1]{\left\lvert#1\right\rvert}

\newcommand{\all}[1]{\,{\vphantom{#1}}^\forall \!{#1}}

\newcommand{\setmid}{\mathrel{}\middle|\mathrel{}}

\newcommand{\eqdef}{\colon \mspace{-10mu} =}

\newcommand{\blfootnote}[1]{%
  \begingroup
  \renewcommand\thefootnote{}\footnote{#1}%
  \addtocounter{footnote}{-1}%
  \endgroup
}

\begin{document}

\markright{4-DIMENSIONAL TWISTOR SPACE}

\title{Almost contact structures on the set of rational curves in a 4-dimensional twistor space}
\author{MICHIFUMI TERUYA}
\date{}
\maketitle

\thispagestyle{fancy}
\renewcommand{\headrulewidth}{0pt}
\fancyhf{}
\fancyhead[C]{4-DIMENSIONAL TWISTOR SPACE}
\fancyhead[RE,LO]{}
\fancyfoot[C]{\thepage}

\begin{abstract}

In this paper, we provide a correspondence between certain 5-dimensional complex spacetimes and 4-dimensional twistor spaces.
The spacetimes are almost contact manifolds whose curvature tensor satisfies certain conditions.

By using the correspondence, we show that a 5-dimensional K-contact manifold can be obtained from the Ren-Wang twistor space\cite{RW}, which is obtained from two copies of $\C^4$ identifying open subsets by a holomorphic map.
From this result, the Ren-Wang twistor space can be interpreted in the framework of Itoh\cite{I}.\blfootnote{\textit{2020 Mathematics Subject Classification}. Primary 53C28; Secondary 53D15.

\textit{Key words and phrases}. twistor theory, almost contact structure, contact metric structure, K-contact structure.}

\end{abstract}

\tableofcontents

\section{Introduction}

 A 3-dimensional complex manifold $Z$ is called a \textit{(3-dimensional) twistor space} if there exists a rational curve $C \subset Z$ whose normal bundle $N_{C/Z}$ satisfies
\begin{equation}
	N_{C/Z} \cong \mathcal{O}(1) \oplus \mathcal{O}(1).
\end{equation}
In this situation, the rational curve $C$ is called a \textit{twistor line}.
By the deformation theory of submanifolds\cite{K}, since the equation
\begin{equation}
	H^1(C, N_{C/Z})=0
\end{equation}
holds, there is a complex analytic family $\{ C_x \}_{x \in M^\C}$, where $M^\C$ is a 4-dimensional complex manifold and $C_x$ is a holomorphic deformation of the twistor line $C$.
We think of the manifold $M^{\C}$ as the set of rational curves in $Z$ near the twistor line $C$.
We can endow the complex manifold $M^{\C}$ with a complex conformal class $[g^{\C}]$, where $g^\C$ is a complex metric.
The conformal class is self-dual.
Conversely, 3-dimensional twistor spaces arise from 4-dimensional self-dual complex manifolds \cite{P} \cite{P2}, which are called \textit{4-dimensional complex self-dual spacetimes}.
The correspondence between 3-dimensional twistor spaces and 4-dimensional self-dual manifolds is called the Penrose correspondence.

In this paper, a 4-dimensional complex manifold $\mathcal{P}$ is called a \textit{4-dimensional twistor space} if there exists a rational curve $C \subset \mathcal{P}$ whose normal bundle $N_{C/\mathcal{P}}$ satisfies
\begin{equation}
	N_{C/\mathcal{P}} \cong \mathcal{O} \oplus \mathcal{O}(1) \oplus \mathcal{O}(1).
\end{equation}
In this situation, the rational curve $C$ is called a \textit{twistor line}.
Since the equation
\begin{equation}
	H^1(C, N_{C/\mathcal{P}})=0
\end{equation}
holds, there is a complex analytic family $\{ C_x \}_{x \in \mathcal{M}^\C}$, where $\mathcal{M}^\C$ is a 5-dimensional complex manifold and $C_x$ is a holomorphic deformation of the twistor line $C$.
We want to give some geometric structures to the manifold $\mathcal{M}^{\C}$ and find a correspondence between 4-dimensional twistor spaces and some 5-dimensional complex manifolds as a kind of the Penrose correspondence.

Let $M$ be an odd-dimensional manifold.
A quadruple $(\phi,\xi,\theta,g)$ is called an \textit{almost contact structure} on the manifold $M$ if the $(1,1)$-tensor field $\phi$, the vector field $\xi$, the 1-form $\theta$, and the Riemannian metric $g$ satisfy certain properties (Definition \ref{def:21}).
A manifold equipped with an almost contact structure is called an \textit{almost contact manifold}.
If $(M,\phi,\xi,\theta,g)$ is a 5-dimensional almost contact manifold, then the distribution $E \eqdef \mathrm{Ker}\ \theta$ on $M$ is of rank 4.
We denote the dual space of $E$ by $E^*$.
The set of 2-forms $\wedge^2$ on the 5-dimensional manifold is decomposed into
\begin{equation}
	\wedge^2 = \wedge^2_+ (E^*) \oplus \wedge^2_{-}(E^*) \oplus (E^* \wedge \eta) , \label{eq:999}
\end{equation}
where $\wedge^2_+ (E^*)$ (resp. $\wedge^2_- (E^*)$) is the $(+1)$-eigenspace (resp. $(-1)$-eigenspace) of the star operator
\begin{equation}
	* \colon \wedge^2 E^* \to \wedge^2 E^*.
\end{equation}
We denote the space $\wedge^2_+ E^*$ (resp. $\wedge^2_- E^*$) by $\wedge_+$ (resp. $\wedge_-$), and $E^* \wedge \theta$ by $\wedge_0$.
According to the decomposition \eqref{eq:999}, the Riemannian curvature tensor $R \in \Gamma \left( \mathrm{End}(\wedge^2) \right)$ of $g$ can be written as the block matrix
\begin{equation}
	R =\begin{pmatrix} R^+_+ & R^+_- & R^+_0 \\
	R^-_+ & R^-_- & R^-_0 \\
	R^0_+ & R^0_- & R^0_0  \end{pmatrix}.
\end{equation}
The Weyl curvature $W \in \Gamma(\mathrm{End}(\wedge^2))$ can be similarly partitioned.

The main result is the following:

\begin{syuteiri*}
Let $\mathcal{P}$ be a 4-dimensional twistor space, $C$ a twistor line in $\mathcal{P}$, and $\mathcal{M}^\C$ a 5-dimensional complex manifold which is the set of holomorphic deformations of the twistor line $C$ in $\mathcal{P}$.
Then, the manifold $\mathcal{M}^\C$ has naturally associated almost contact structures.

There is the reverse construction:
if a 5-dimensional complex almost contact manifold satisfies the curvature conditions
\begin{equation}
	R^-_0 =0,\ \ \ W^-_- =0, \label{eq:000}
\end{equation}
then we can construct a 4-dimensional twistor space $\mathcal{P}$ as the set of ``$\beta$-surfaces'' in the almost contact manifold.
The almost contact manifold is one of the associated almost contact manifolds with the 4-dimensional twistor space $\mathcal{P}$.
\end{syuteiri*}

We give a brief account of the result.
The condition $W^-_- = 0$ corresponds to the self-duality of a 4-dimensional spacetime, which is the condition imposed on a spacetime in 3-dimensional twistor theory.
The condition $R^-_0 =0$ is specific to the 5-dimensional case.
In the main theorem, we assume that a spacetime is a complex almost contact manifold, but the associated twistor space does not depend on the $(1,1)$-tensor field $\phi$ since $\beta$-surfaces depend only on the metric $g$ and the 1-form $\theta$.
Thus, we do not have to impose the existence of $\phi$ on a 5-dimensional manifold.
However, important examples are special cases of almost contact manifolds. 

We illustrate an application of the main result.
Ren-Wang\cite{RW} construct a 4-dimensional complex manifold which is obtained from 5-dimensional complex Heisenberg group $\mathscr{H}$ as a set of ``$\alpha$-planes'' in $\mathscr{H}$.
Since this manifold is a 4-dimensional twistor space, we call it the Ren-Wang twistor space.
Ren-Wang characterize $\alpha$-planes by using the Lie group structure.
On the other hand, we characterize $\alpha$-planes by using an almost contact structure.
Hence, we can expect to extend the Penrose-Ward correspondence (\cite{RW}, Theorem 1.1) to 5-dimensional complex almost contact manifolds which satisfy certain conditions.

We describe the structure of the paper.

In Section 2, we explain the basics of almost contact structures.
For details, please refer to Sasaki\cite{S}, Hatakeyama-Sasaki\cite{HS}, and Blair\cite{B}.
We also define the equivalent class of almost contact structures.
This is the geometric structure that we give on the set of holomorphic deformations of a twistor line in a 4-dimensional twistor space.

In Section 3, we introduce some important concepts of 5-dimensional almost contact manifolds.
We define a \textit{$\beta$-plane} and a \textit{$\beta$-surface} in a 5-dimensional complex almost contact manifold.
We define on the bundle of $\beta$-planes some rank 2 distribution, which is called the \textit{tautological distribution}.
If this distribution is integrable, then we can construct the twistor space as the set of $\beta$-surfaces.
We give a sufficient condition for the integrability of the tautological distribution in Proposition \ref{prop:310}.

In Section 4, we define a 4-dimensional twistor space and prove the main theorem (Theorem \ref{thm:11}).
The correspondence between twistor spaces and spacetimes is similar to the case of 3-dimensional twistor spaces \cite{P2}.
Our proof is based on the method of LeBrun-Mason\cite{LM}.

In Section 5, we give two examples.
One is a trivial example:
if we consider a 3-dimensional twistor space $Z$ and set $\mathcal{P} \eqdef Z \times \C$, then the complex manifold $\mathcal{P}$ is a 4-dimensional twistor space.
In this case, the 1-form $\theta$ obtained in the main theorem satisfies $\theta \wedge d\theta=0$.
In particular, the complex almost contact structure obtained in the main theorem is not a complex K-contact structure.
The other is the ``standard example'':
the Ren-Wang twistor space, which is obtained from the 5-dimensional complex manifold $\mathscr{H}$, is an example of a 4-dimensional twistor space.
The equivalent class of complex almost contact structures obtained from this twistor space includes those which are complex K-contact structures.
Hence, we construct a complex K-contact structure on the 5-dimensional complex Heisenberg group $\mathscr{H}$.

In Section 6, we introduce the method of Itoh\cite{I}, which constructs a CR-twistor space from a 5-dimensional real K-contact manifold, and interpret the Ren-Wang twistor space in the framework of Itoh.

Gundry\cite{G} also considered 4-dimensional twistor spaces and constructed torsional Galilean structures on the sets of twistor lines.
In Appendix B, we show that in the case of the Ren-Wang twister space, the 1-form obtained by us coincides with the 1-form constituting the torsional Galilean structure obtained by Gundry.

\section*{Acknowledgments}

I would like to thank my Ph.D. supervisor, Professor Nobuhiro Honda, for his useful advice on how to proceed with the research and the thesis elaboration.
I would also like to thank Professor Maciej Dunajski for introducing Gundry's paper\cite{G} to me.

\section{Almost contact structure and K-contact structure}

In this section, we describe the notion of almost contact structures introduced in \cite{S}, \cite{HS}.
These structures are broader objects than contact structures.

\begin{definition}\label{def:21}
An \textit{almost contact structure} on an odd-dimensional manifold $M^{2n+1}$ is a quadruple $(\phi,\xi,\theta,g)$, where $\phi$ is a $(1,1)$-tensor field, $\xi$ is a vector field, $\theta$ is a 1-form, and $g$ is a Riemannian metric, which satisfy the following conditions:
\begin{align}
	\mathrm{rk} (\phi) &=2n, \label{eq:001} \\
	\phi(\xi) &=0, \label{eq:002} \\
	\theta(\phi(X)) &=0,\ \ \ \all{X} \in \mathfrak{X}(M), \label{eq:003} \\
	\phi^2 &=-\mathrm{id} +\theta \otimes \xi, \label{eq:004} \\
	\theta(\xi) &=1, \label{eq:005} \\
	\theta(X) &= g(X, \xi),\ \ \ \all{X} \in \mathfrak{X}(M), \label{eq:006} \\
	g(\phi(X),\phi(Y)) &= g(X, Y) -\theta(X) \theta(Y),\ \ \ \all{X,Y} \in \mathfrak{X}(M). \label{eq:007}
\end{align}

A manifold equipped with an almost contact structure is called an \textit{almost contact manifold}.
\hfill{$\Box$}
\end{definition}

By the notion of considering holomorphic tensors instead of real ones, we can define a \textit{complex almost contact structure} on a $(2n+1)$-dimensional complex manifold.

On a real or complex almost contact manifold $(M,\phi,\xi,\theta,g)$, there is the distribution
\begin{equation}
	E \eqdef \mathrm{Ker}(\theta) ;
\end{equation}
which is of rank $2n$, and the tangent bundle of $M$ is decomposed into
\begin{equation}
	TM= E \oplus \langle \xi \rangle. \label{eq:008}
\end{equation}
This decomposition is orthogonal with respect to the metric $g$.

Next, we define a \textit{contact metric manifold} and a \textit{K-contact manifold} as an almost contact manifold of a special kind:

\begin{definition}\label{def:22}
An almost contact structure $(\phi,\xi,\theta,g)$ on a manifold $M^{2n+1}$ is called a \textit{contact metric structure} if the 1-form $\theta$ is a contact form, the vector field $\xi$ is the Reeb vector field, and the equation
\begin{equation}
	g(X,\phi(Y)) = \frac{1}{2} d\theta(X,Y), \label{eq:010}
\end{equation}
holds for arbitrary vector fields $X,Y \in \mathfrak{X}(M)$.

A contact metric structure $(\phi,\xi,\theta,g)$ is called a \textit{K-contact structure} if the Reeb vector field $\xi$ is a Killing vector field with respect to the metric $g$.
\hfill{$\Box$}
\end{definition}

The above definition can be applied to a complex almost contact structure.
Thus, we can define a \textit{complex contact metric structure} and a \textit{complex K-contact structure}.

Next, to each almost contact structure, we define a special family of almost contact structures.

\begin{lemma}\label{lem:23}
Suppose that a quadruple $(\phi,\xi,\theta,g)$ is an almost contact structure on a manifold $M$.
For arbitrary non-vanishing functions $f$ and $F$, and an arbitrary vector field $X_0$ which belongs to $\Gamma(E)$, we set
\begin{align}
	\phi^\prime &\eqdef \phi -f \phi(X_0) \otimes \theta, \\
	\xi^\prime &\eqdef X_0 + f^{-1} \xi, \\
	\eta^\prime &\eqdef f \theta, \\
	g^\prime &\eqdef F g -fF g(\cdot,X_0) \otimes \theta -fF \theta \otimes g(\cdot,X_0) \notag \\
	&\ \ +\left( -F +f^2 +f^2 F g(X_0,X_0) \right) \theta \otimes \theta.
\end{align}
Then, the quadruple $(\phi^\prime, \xi^\prime, \theta^\prime, g^\prime)$ is also an almost contact structure on $M$.
\end{lemma}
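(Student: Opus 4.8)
The plan is to verify directly that the quadruple $(\phi', \xi', \theta', g')$ (with $\theta' = \eta' = f\theta$) satisfies each of the seven defining conditions \eqref{eq:001}--\eqref{eq:007} of Definition \ref{def:21}. Throughout I would use the two structural facts that make the computation close: since $X_0 \in \Gamma(E) = \Gamma(\mathrm{Ker}\,\theta)$, we have $\theta(X_0) = 0$ and hence, by \eqref{eq:006}, $g(X_0, \xi) = 0$; and by \eqref{eq:003} the vector field $u \eqdef \phi(X_0)$ again lies in $E$, i.e. $\theta(u) = 0$. It is convenient to record at the outset the values $\theta(\xi') = f^{-1}$ and $\phi(\xi') = \phi(X_0) = u$, both immediate from \eqref{eq:002}, and the identity $\phi'(X) = \phi(X) - f\theta(X)u$.

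The easy conditions come first. From $\theta(u) = 0$ one gets $\theta'(\phi'(X)) = f\theta(\phi(X)) = 0$, which is \eqref{eq:003}; likewise $\theta'(\xi') = f\theta(\xi') = 1$ is \eqref{eq:005}, and $\phi'(\xi') = u - f\theta(\xi')u = u - u = 0$ is \eqref{eq:002}. For the square \eqref{eq:004} I would compute $(\phi')^2(X) = \phi^2(X) - f\theta(X)\phi^2(X_0)$, the cross terms dying because $\theta(\phi(X)) = 0$ and $\theta(u) = 0$, then substitute \eqref{eq:004} for the original structure together with $\theta(X_0) = 0$ to obtain $(\phi')^2(X) = -X + \theta(X)(\xi + fX_0) = -X + \theta'(X)\xi'$. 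The rank condition \eqref{eq:001} then follows formally: \eqref{eq:004} and \eqref{eq:005} for the primed data force $\mathrm{Ker}(\phi') = \langle \xi' \rangle$, so $\mathrm{rk}(\phi') = 2n$.

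The metric conditions \eqref{eq:006} and \eqref{eq:007} are the substantive part, and here the precise coefficient $c \eqdef -F + f^2 + f^2 F g(X_0, X_0)$ of $\theta \otimes \theta$ in $g'$ is exactly what is needed. For \eqref{eq:006} I would expand $g'(X, \xi')$ using $g(X, \xi') = g(X, X_0) + f^{-1}\theta(X)$, $g(\xi', X_0) = g(X_0, X_0)$, and $\theta(\xi') = f^{-1}$; the terms proportional to $g(X, X_0)$ cancel, and substituting the value of $c$ collapses the remainder to $f\theta(X) = \theta'(X)$. The main obstacle is \eqref{eq:007}. The key simplification is that $\phi'(X)$ always lies in $E$ (as $\theta(\phi'(X)) = 0$), so every $\theta$-dependent term of $g'$ vanishes on the pair $(\phi'(X), \phi'(Y))$ and $g'(\phi'(X), \phi'(Y)) = F\,g(\phi'(X), \phi'(Y))$. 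Expanding the latter by bilinearity and applying \eqref{eq:007} for $g$ to the three inner products $g(\phi(X), \phi(Y))$, $g(\phi(X), u)$, $g(u, u)$ (each reducing via $\theta(X_0) = 0$) produces an explicit expression in $g(X, Y)$, $g(X, X_0)$, $g(Y, X_0)$, $g(X_0, X_0)$, and $\theta(X)\theta(Y)$. I would then expand the right-hand side $g'(X, Y) - \theta'(X)\theta'(Y)$ directly and match the two expressions term by term; the only nontrivial match is the coefficient of $\theta(X)\theta(Y)$, where the left side yields $-F + f^2 F g(X_0, X_0)$ and the right side yields $c - f^2$, and these coincide precisely because of the chosen value of $c$. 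This bookkeeping of the $\theta\otimes\theta$ coefficient is the one place where the specific form of $g'$ is indispensable, so it is the step I would write out most carefully.
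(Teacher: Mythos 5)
Your proposal is correct and follows essentially the same route as the paper: direct verification of each axiom, with the same key simplifications (cross terms in $(\phi')^2$ dying because $\theta\circ\phi=0$, the $g(X,X_0)$ terms cancelling in the check of \eqref{eq:006}, and $g'(\phi'(X),\phi'(Y))$ reducing to $F\,g(\phi'(X),\phi'(Y))$ before applying \eqref{eq:007} for $g$). The only differences are cosmetic — you spell out the rank argument and the coefficient bookkeeping that the paper leaves implicit.
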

\begin{proof}
Equations \eqref{eq:003} and \eqref{eq:005} are obvious, and the equation \eqref{eq:001} can be derived from other equations.
Hence, we only need to check the equations \eqref{eq:002},\eqref{eq:004},\eqref{eq:006}, and \eqref{eq:007}.

From direct calculation, it follows that
\begin{align}
	\phi^\prime \left( \xi^\prime \right) &= (\phi - f \phi(X_0) \otimes \theta)(X_0 +f^{-1} \xi) \notag \\
	&=\phi(X_0) -f \phi(X_0) \theta(f^{-1} \xi) =0. 
\end{align}
Thus, the equation \eqref{eq:002} holds. 
Also from direct calculation, for any vector field $X$, it follows that
\begin{align}
	(\phi^\prime)^2(X) &=(\phi - f \phi(X_0) \otimes \theta)^2(X) \notag \\
	&=(\phi - f \phi(X_0) \otimes \theta) (\phi(X) -f\theta(X) \phi(X_0)) \notag \\
	&=\phi^2(X) -f \theta(X) \phi^2(X_0) \notag \\
	&=(-\mathrm{id} +\xi \otimes \theta)(X) +(X_0 \otimes (f\theta)) (X) \notag \\
	&=\left( -\mathrm{id} + (f^{-1} \xi +X_0) \otimes \theta^\prime \right)(X) \notag \\
	&=\left( -\mathrm{id}+ \xi^\prime \otimes \theta^\prime \right) (X).
\end{align}
Thus, the equation \eqref{eq:004} holds.

Next, from the calculation
\begin{align}
	g^\prime \left( \xi^\prime, X \right) &= F g \left( \xi^\prime, X \right) -f F g \left( \xi^\prime, X_0 \right) \theta(X) -f F g (X,X_0) \theta \left( \xi^\prime \right) \notag \\
	&\ \ +(-F+f^2+f^2 F g(X_0,X_0)) \theta \left( \xi^\prime \right) \theta(X) \notag \\
	&=f^{-1}F \theta(X) +F g(X_0,X) -f F g(X_0,X_0) \theta(X) -F g(X,X_0) \notag \\
	&\ \ +(-F+f^2+f^2 F g(X_0,X_0))f^{-1} \theta(X) \notag \\
	&= f \theta(X) =\theta^\prime(X), 
\end{align}
we have \eqref{eq:006}.

Finally, noting that $\theta(\phi(X))=0$ for any vector field $X$, we have
\begin{equation}
	g^\prime(\phi(X),\phi(Y))= F g(\phi(X),\phi(Y)).
\end{equation}
Thus, we have
\begin{align}
	&\ \ g^\prime \left( \phi^\prime(X), \phi^\prime (Y) \right) \notag \\
	&= g^\prime (\phi(X) -f\theta(X) \phi(X_0), \phi(Y) -f\theta(Y) \phi(X_0)) \notag \\
	&= g^\prime (\phi(X),\phi(Y)) - f \theta(X) g^\prime (\phi(X_0),\phi(Y)) -f \theta(Y) g^\prime (\phi(X_0),\phi(X)) \notag \\
	&\ \ +f^2 \theta(X) \theta(Y) g^\prime(\phi(X_0),\phi(X_0)) \notag \\
	&=F g(\phi(X),\phi(Y)) - f F \theta(X) g (\phi(X_0),\phi(Y)) -f F \theta(Y) g (\phi(X_0),\phi(X)) \notag \\
	&\ \ +f^2 F \theta(X) \theta(Y) g(\phi(X_0),\phi(X_0)) \notag \\
	&=F g (X,Y) -F \theta(X) \theta(Y) -f F \theta(X) g (X_0,Y) -f F \theta(Y) g (X_0,X) \notag \\
	&\ \ +f^2 F \theta(X) \theta(Y) g (X_0,X_0) \notag \\
	&=g^\prime(X,Y) -f^2 \theta(X) \theta (Y) =g^\prime(X,Y) -\theta^\prime(X) \theta^\prime(Y), 
\end{align}
and the equation \eqref{eq:007} holds.
\end{proof}

\begin{definition}\label{def:24}
For an almost contact structure $(\phi,\xi,\theta,g)$ on a manifold $M$, we define a family of almost contact structures as
\begin{align}
	&[\phi,\xi,\theta,g] \notag \\
	&\eqdef \left\{ \left( \phi^\prime, \xi^\prime, \theta^\prime, g^\prime \right) \setmid f\ \text{and}\ F\ \text{are non-vanishing functions on $M$},\ \ X_0 \in \Gamma(E) \right\}, \notag
\end{align}
where the quadruple $\left( \phi^\prime, \xi^\prime, \theta^\prime, g^\prime \right)$ is the almost contact structure given in Lemma \ref{lem:23}.
The family $[\phi,\xi,\theta,g]$ is called the \textit{equivalent class} defined by $(\phi,\xi,\theta,g)$ or simply an equivalent class of almost contact structures.

We can also define an equivalent class of \textit{complex} almost contact structures in a similar way.
\hfill{$\Box$}
\end{definition}

\section{5-dimensional spacetimes}

In this section, we introduce some important concepts for 5-dimensional (real or complex) almost contact manifolds.

First, let $\mathcal{M}$ be a 5-dimensional (real or complex) manifold and a quadruple $(\phi,\xi,\theta,g)$ an almost contact structure on $\mathcal{M}$.
Then, we define the 4-dimensional distribution $E$ as
\begin{equation}
	E \eqdef \mathrm{Ker}\ \theta.
\end{equation}
The tangent bundle of $\mathcal{M}$ is decomposed into
\begin{equation}
	T \mathcal{M} = E \oplus \langle \xi \rangle,
\end{equation}
and the cotangent bundle of $M$ is decomposed into
\begin{equation}
	T^* \mathcal{M} = E^* \oplus \langle \theta \rangle. \label{eq:130}
\end{equation}
Since the bundle $E$ is of rank 4 and has equipped with the metric $g \rvert_E$, we can define the star operator on the space $\Gamma (\wedge^2 E^*)$, and we have
\begin{equation}
	*^2 =\mathrm{id}.
\end{equation}
We denote the space $\Gamma(E^*) \wedge \theta$ by $\wedge_0$.
In this situation, the set of 2-forms $\wedge^2$ is decomposed into
\begin{equation}
	\wedge^2 =\wedge_+ \oplus \wedge_{-} \oplus \wedge_0, \label{eq:131}
\end{equation}
where $\wedge^2_\pm $ is the $(\pm 1)$-eigenspace of the star operator respectively.
The star operator can be extended on $\wedge^2$ by setting it to $0$ on $\wedge_0$.
Using the decomposition \eqref{eq:131}, the curvature tensor $R \in \mathrm{End}(\wedge^2)$ can be written as the block matrix
\begin{equation}
	R =\begin{pmatrix} R^+_+ & R^+_- & R^+_0 \\
	R^-_+ & R^-_- & R^-_0 \\
	R^0_+ & R^0_- & R^0_0  \end{pmatrix}. 
\end{equation}
The Weyl curvature $W$ and the trace-free Ricci tensor $K$ can be similarly partitioned.

\begin{lemma}\label{lem:31}
Let $K \in \mathrm{End} (\wedge^2)$ be the trace-free Ricci tensor.
Then,
\begin{equation}
	K \circ * +* \circ K \equiv s^\prime * \ \ \ (\mathrm{mod}\ \wedge_0) \label{eq:1311}
\end{equation}
holds on $\wedge^2 E^*$, where $s^\prime$ is a function determined solely from the metric $g$.
\end{lemma}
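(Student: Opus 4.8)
The plan is to reduce the assertion, which concerns the full $5$-dimensional curvature, to a single piece of $4$-dimensional linear algebra on the fibre $E$, by showing that the only part of the trace-free Ricci operator surviving modulo $\wedge_0$ on $\wedge^2 E^*$ is the derivation action of a symmetric endomorphism of $E$, and that such an action interacts with $*$ in a universal way.

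First I would express $K$ through the standard decomposition of the curvature operator. Let $\rho$ denote the $g$-symmetric endomorphism of $T\mathcal{M}$ corresponding to the trace-free Ricci tensor, and for a $g$-symmetric endomorphism $S$ let $S^{\wedge}$ be its derivation extension to $2$-forms, $S^{\wedge}(u\wedge v)=Su\wedge v+u\wedge Sv$. With the standard normalization the trace-free Ricci part of $R$ is
\[
K=\tfrac{1}{n-2}\,\rho^{\wedge},\qquad n=5,
\]
which one verifies by evaluating the Kulkarni--Nomizu product of $\mathrm{Ric}_0$ and $g$ on a decomposable bivector. Using the orthogonal splitting $T\mathcal{M}=E\oplus\langle\xi\rangle$ (here $\xi$ is a unit vector, since $g(\xi,\xi)=\theta(\xi)=1$), I decompose $\rho$ into its $E\to E$ block $A\eqdef \mathrm{pr}_E\circ\rho|_E$ and the terms coupling $E$ with $\xi$. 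Evaluating the derivation formula on $u\wedge v$ with $u,v\in E$, every coupling term produces a form in $E^{*}\wedge\theta=\wedge_0$, so
\[
K(u\wedge v)\equiv \tfrac{1}{n-2}\,A^{\wedge}(u\wedge v)\pmod{\wedge_0};
\]
modulo $\wedge_0$, $K$ acts on $\wedge^2 E^{*}$ as $\tfrac{1}{n-2}A^{\wedge}$ for a $g|_E$-symmetric endomorphism $A$ of the $4$-dimensional space $E$.

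The remaining step is the algebraic identity: for every $g|_E$-symmetric endomorphism $A$ of the oriented $4$-dimensional inner-product space $E$,
\[
*\circ A^{\wedge}+A^{\wedge}\circ *=\mathrm{tr}(A)\,*\qquad\text{on }\wedge^2E^{*}.
\]
To prove it I split $A=A_0+\tfrac{1}{4}\mathrm{tr}(A)\,\mathrm{id}$: the scalar part extends to $\tfrac{1}{2}\mathrm{tr}(A)\,\mathrm{id}_{\wedge^2E^{*}}$, commutes with $*$, and contributes exactly $\mathrm{tr}(A)\,*$, while for the trace-free symmetric part it suffices to show that $A_0^{\wedge}$ anticommutes with $*$, i.e.\ interchanges $\wedge_+$ and $\wedge_-$. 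This I would check on a $g|_E$-orthonormal eigenbasis $e_1,\dots,e_4$ of $A_0$, where $A_0^{\wedge}(e_i\wedge e_j)=(\lambda_i+\lambda_j)\,e_i\wedge e_j$ and $\sum_i\lambda_i=0$ force $*A_0^{\wedge}+A_0^{\wedge}*$ to vanish on the standard basis of $\wedge^2E^{*}$, the general case following by $SO(4)$-equivariance. Combining the steps, on $\wedge^2E^{*}$ one obtains
\[
K\circ *+*\circ K\equiv \tfrac{1}{n-2}\bigl(A^{\wedge}\circ *+*\circ A^{\wedge}\bigr)=\tfrac{\mathrm{tr}(A)}{n-2}\,*\pmod{\wedge_0},
\]
so the lemma holds with $s'$ equal to $\tfrac{1}{n-2}$ times the trace of $\mathrm{Ric}_0$ restricted to $E$, equivalently $-\tfrac{1}{n-2}\mathrm{Ric}_0(\xi,\xi)$, a scalar function of the metric.

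The main obstacle is the interface between the genuinely $5$-dimensional curvature and the $4$-dimensional star operator: one must confirm that the components of $\rho$ mixing $E$ and $\xi$ land in $\wedge_0$ and hence drop out modulo $\wedge_0$ (so that $*$, which annihilates $\wedge_0$, sees only the $\wedge^2E^{*}$-part of $K$ when composed on the left). Once this reduction to $E$ is justified, the content is the classical fact that a trace-free symmetric endomorphism of a $4$-dimensional space acts off-diagonally on $\wedge_+\oplus\wedge_-$ and therefore anticommutes with $*$.
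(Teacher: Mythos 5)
Your proposal is correct, and while it proves the same underlying cancellation as the paper, it organizes the argument quite differently. The paper's proof is a bare-hands coframe computation: it writes the action of $K$ on basis bivectors via the component formula $K(\theta^i \wedge \theta^j) = \tfrac{2}{3}\left( \sum_k K_{ki}\,\theta^k \wedge \theta^j + K_{kj}\,\theta^i \wedge \theta^k \right)$, expands $*K(\theta^1\wedge\theta^2)$ and $K(*(\theta^1\wedge\theta^2))$ explicitly, discards the $\wedge_0$ terms, and observes that the symmetry $K_{ij}=K_{ji}$ cancels every off-diagonal contribution, leaving $\tfrac{2}{3}\sum_{i=1}^4 K_{ii}\,*$; the reduction to the single bivector $\theta^1\wedge\theta^2$ is justified by rearranging the orthonormal frame. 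You instead isolate two invariant statements: first, that modulo $\wedge_0$ the operator $K$ acts on $\wedge^2 E^*$ as $\tfrac{1}{n-2}A^{\wedge}$ for the $E$-block $A$ of the trace-free Ricci endomorphism (the $E$--$\xi$ coupling terms landing in $\wedge_0$, which is the same discarding step the paper performs inside its computation); and second, the stand-alone four-dimensional identity $*\circ A^{\wedge}+A^{\wedge}\circ * = \mathrm{tr}(A)\,*$, proved by splitting off the trace part (which commutes with $*$) and diagonalizing the trace-free part (which anticommutes, i.e.\ interchanges $\wedge_+$ and $\wedge_-$). Your route buys a conceptual dividend: the cancellation that the paper obtains from the symmetry of $K_{ij}$ is exhibited as the classical fact about trace-free symmetric endomorphisms in dimension four, and you obtain the explicit identification $s' = \tfrac{1}{n-2}\,\mathrm{tr}(A) = -\tfrac{1}{n-2}\,\mathrm{Ric}_0(\xi,\xi)$, which makes transparent the paper's closing remark that this quantity vanishes identically in the four-dimensional situation but generically not in the five-dimensional one. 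The paper's route buys self-containedness: it needs neither the Kulkarni--Nomizu identification of the trace-free Ricci part of the curvature operator nor an eigenbasis. Two cosmetic points: your normalization $\tfrac{1}{n-2}$ differs from the paper's $\tfrac{2}{3}$ by a conventional factor of $2$, immaterial since the lemma does not pin down $s'$; and the orthonormal eigenbasis of $A_0$ should be chosen positively oriented (always arrangeable by a permutation or sign flip), after which your appeal to $SO(4)$-equivariance is not needed.
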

\begin{proof}
Let a tuple of 1-forms $(\theta^1,\dots,\theta^4)$ be a locally orthonormal frame of $E^*$ such that $\theta^1 \wedge \dots \wedge \theta^4$ is a volume form of $g \rvert_E$.
If we denote the 1-form $\theta$ by $\theta^0$, then $(\theta^0,\theta^1,\dots,\theta^4)$ is a locally orthonormal frame of the cotangent bundle $T^* \mathcal{M}$.
Since we can rearrange the order of the 1-forms, it is enough to show that the equation \eqref{eq:1311} holds for the 2-form $\theta^1 \wedge \theta^2$.

By the definition of the trace-free Ricci tensor $K$, the equation
\begin{equation}
	K(\theta^i \wedge \theta^j) =\frac{2}{3} \left( \sum_{k=0}^5 K_{ki} \theta^k \wedge \theta^j + K_{kj} \theta^i \wedge \theta^k \right)
\end{equation}
holds, where $K_{ij}$ is the component of $K$.
From the direct calculation, we have
\begin{align}
	*K(\theta^1 \wedge \theta^2) &= \frac{2}{3}* (K_{22} \theta^1 \wedge \theta^2 +K_{32} \theta^1 \wedge \theta^3 +K_{42} \theta^1 \wedge \theta^4 \notag \\
	&\ \ + K_{11} \theta^1 \wedge \theta^2 +K_{31} \theta^3 \wedge \theta^2 +K_{41} \theta^4 \wedge \theta^2 ) \notag \\
	&=\frac{2}{3} (K_{22} \theta^3 \wedge \theta^4 -K_{32} \theta^2 \wedge \theta^4 + K_{42} \theta^2 \wedge \theta^3 \notag \\
	&\ \ +K_{11} \theta^3 \wedge \theta^4 -K_{31} \theta^1 \wedge \theta^4 +K_{41} \theta^1 \wedge \theta^3),
\end{align}
\begin{align}
	K(* (\theta^1 \wedge \theta^2)) &= K(\theta^3 \wedge \theta^4) \notag \\
	&\equiv \frac{2}{3} ( K_{13} \theta^1 \wedge \theta^4 +K_{23} \theta^2 \wedge \theta^4 +K_{33} \theta^3 \wedge \theta^4 \notag \\
	&\ \ +K_{14} \theta^3 \wedge \theta^1 +K_{24} \theta^3 \wedge \theta^2 +K_{44} \theta^3 \wedge \theta^4) \ \ (\mathrm{mod}\ \wedge_0).
\end{align}
Since $K_{ij}$ is symmetric, we have
\begin{equation}
	(K \circ * +* \circ K) \theta^1 \wedge \theta^2 \equiv \frac{2}{3} \sum_{i=1}^4 K_{ii} *(\theta^1 \wedge \theta^2) \ \ \ (\mathrm{mod}\ \wedge_0).\label{eq:90004}
\end{equation}
In general, in the 5-dimensional case, the right-hand side of the equation \eqref{eq:90004} does not vanish but depends only on the metric $g$.
\end{proof}

\begin{lemma}\label{lem:32}
For the component of trace-free Ricci tensor $K^-_-$, the equation
\begin{equation}
	K^-_- = \frac{s^\prime}{2} \mathrm{id}
\end{equation}
holds, where $s^\prime$ is the function given in Lemma \ref{lem:31}.
\end{lemma}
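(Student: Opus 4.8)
The plan is to extract the $(-,-)$-block of the operator identity established in Lemma \ref{lem:31} and read off the claim directly. The key observation is that the extended star operator is block-diagonal with respect to the decomposition \eqref{eq:131}: it acts as $+\mathrm{id}$ on $\wedge_+$, as $-\mathrm{id}$ on $\wedge_-$, and as $0$ on $\wedge_0$.

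First I would write both $K$ and $*$ as block matrices relative to $\wedge_+ \oplus \wedge_- \oplus \wedge_0$. Since the matrix of $*$ is $\mathrm{diag}(\mathrm{id},-\mathrm{id},0)$, forming $K \circ *$ amounts to right-multiplying $K$ by this diagonal matrix, which scales each block-column of $K$ by the corresponding eigenvalue of $*$; forming $* \circ K$ instead scales each block-row. In particular, the $(-,-)$-entry of $K \circ *$ is $-K^-_-$ (the $\wedge_-$ column is scaled by $-1$) and the $(-,-)$-entry of $* \circ K$ is likewise $-K^-_-$ (the $\wedge_-$ row is scaled by $-1$), so the $(-,-)$-entry of $K \circ * + * \circ K$ equals $-2K^-_-$.

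Next I would compare this with the right-hand side of \eqref{eq:1311}. The $(-,-)$-entry of $s^\prime *$ is $-s^\prime\,\mathrm{id}$. Restricting the domain in \eqref{eq:1311} to $\wedge_- \subset \wedge^2 E^*$ and projecting the output onto $\wedge_-$ therefore yields $-2K^-_- = -s^\prime\,\mathrm{id}$, which gives $K^-_- = \tfrac{s^\prime}{2}\,\mathrm{id}$ as claimed.

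The computation is essentially bookkeeping, so there is no genuine obstacle; the only point requiring care is to confirm that the ``$\mathrm{mod}\ \wedge_0$'' appearing in Lemma \ref{lem:31} is harmless for this particular block. This holds because the $\wedge_-$-part of $(K\circ *+*\circ K)$ applied to an element of $\wedge_-$ is already an honest element of $\wedge_-$, hence unaffected by discarding the $\wedge_0$-component, so the function $s^\prime$ inherited from Lemma \ref{lem:31} is exactly the right constant of proportionality.
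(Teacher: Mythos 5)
Your proposal is correct and follows essentially the same route as the paper: both arguments extract the anti-self-dual block of the anticommutator identity in Lemma \ref{lem:31}, using that the extended star acts as $-\mathrm{id}$ on $\wedge_-$ and that the ``$\mathrm{mod}\ \wedge_0$'' ambiguity cannot affect the $\wedge_-$-component. The paper merely phrases the computation pointwise (decomposing $K\varphi$ for an anti-self-dual $\varphi$ and comparing the two expressions for $*K\varphi$), whereas you organize it as block-matrix bookkeeping; the content is identical.
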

\begin{proof}

Let $\varphi$ be an anti-self-dual 2-form.
According to the decomposition \eqref{eq:131}, we decompose the 2-form $K\varphi$ into
\begin{equation}
	K\varphi = \varphi_+ +\varphi_- + \varphi_0.
\end{equation}
Then, we have
\begin{equation}
	* K\varphi = \varphi_+ -\varphi_-. \label{eq:1234}
\end{equation}
On the other hand, by Lemma \ref{lem:31}, we see that
\begin{align}
	* K\varphi &\equiv - K (* \varphi) +s^\prime * \varphi \notag \\
	&\equiv K \varphi -s^\prime \varphi \notag \\
 	&\equiv \varphi _+ +(\varphi_- -s^\prime \varphi) \ \ \ (\mathrm{mod} \ \wedge_0).  \label{eq:1235}
\end{align}
By comparing anti-self-dual components of \eqref{eq:1234} and \eqref{eq:1235}, we have
\begin{equation}
	K^-_- \varphi =\varphi_- = \frac{s^\prime}{2} \varphi. 
\end{equation}
\end{proof}

Next, we consider a 5-dimensional complex almost contact manifold

\noindent
$(\mathcal{M}^\C, \phi, \xi, \theta, g )$.
For any point $x \in \mathcal{M}^\C$, we identify the 4-dimensional inner product space $(E_x,g_x)$ with the complex Euclidian space $(\C^4,g_{\text{eucl}})$, and decompose it into the spinor spaces
\begin{equation}
	E_x \cong S_x^+ \otimes S_x^- .
\end{equation}

\begin{definition}\label{def:33}
A 2-dimensional subspace $V$ in the tangent space $T_x \mathcal{M}^\C$ is called an \textit{$\alpha$-plane} (resp. a $\beta$-plane) if $V \subset E_x$ and $V$ can be written as
\begin{equation}
	V = \psi^+ \otimes S^-_x,\ \ \ \psi^+ \in S_x^+\ \ \ (\text{resp.}\ \  V=S^-_x \otimes \psi^{-},)
\end{equation}
for some $\psi^+ \in S_x^+ \setminus \{ 0 \}$ (resp. $\psi^- \in S_x^- \setminus \{ 0 \}$).
Also, a 2-dimensional subspace $V$ in $T_x M$ is called \textit{null} if the restriction of the metric $g \rvert_V$ is $0$. \hfill{$\Box$}
\end{definition}

From the same argument as in the four-dimensional case\cite{LM}, we see that:

\begin{lemma}
Any $\alpha$-planes and $\beta$-planes are null planes.
Conversely, any null plane in $E_x$ is an $\alpha$-plane or a $\beta$-plane.
Also, a null element in $\wedge_-$ corresponds to a $\beta$-plane.
\end{lemma}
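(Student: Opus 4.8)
\emph{Proof plan.} The statement is pure linear algebra on a single fibre $E_x\cong S_x^+\otimes S_x^-$, so I would fix a point $x$ and suppress it throughout. The starting point is to record the metric in spinor terms: up to scale there are symplectic forms $\epsilon^{+}$ on $S^+$ and $\epsilon^{-}$ on $S^-$, and the Euclidean metric is recovered on decomposable vectors by $g(\psi^+\otimes\psi^-,\chi^+\otimes\chi^-)=\epsilon^{+}(\psi^+,\chi^+)\,\epsilon^{-}(\psi^-,\chi^-)$, extended bilinearly. With this in hand the first assertion is immediate: any two vectors of an $\alpha$-plane $\psi^+\otimes S^-$ share the same factor $\psi^+$, so their inner product carries the factor $\epsilon^{+}(\psi^+,\psi^+)=0$ by antisymmetry, whence $g$ vanishes identically on the plane. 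The computation for a $\beta$-plane $S^+\otimes\psi^-$ is identical with the roles of $S^+$ and $S^-$ interchanged.

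For the converse I would first identify the null cone. Writing $S^+\otimes S^-\cong\mathrm{Mat}_{2\times2}$, the quadratic form $v\mapsto g(v,v)$ is a nonzero multiple of $\det$, since both are, up to scale, the unique $\mathrm{SL}(S^+)\times\mathrm{SL}(S^-)$-invariant quadratic forms and both vanish on rank-one tensors. Hence the null cone is exactly the locus of matrices of rank $\le 1$, i.e. the decomposable tensors $\psi^+\otimes\psi^-$. Now let $V$ be a null $2$-plane. Since $g|_V=0$, every nonzero $v\in V$ is null, hence decomposable; I would choose a basis $v_i=\psi_i^+\otimes\psi_i^-$ ($i=1,2$) and represent each $v_i$ by a rank-one matrix $M_i$. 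The vector $v_1+v_2\in V$ is again null, so $\det(M_1+M_2)=0$, and the factorization of the determinant of a sum of two rank-one $2\times2$ matrices gives $\det[\psi_1^+\ \psi_2^+]\cdot\det[\psi_1^-\ \psi_2^-]=0$ (columns as indicated). Thus either $\psi_1^+$ and $\psi_2^+$ are proportional, in which case both $v_i$ lie in the $2$-dimensional space $\psi^+\otimes S^-$ and $V=\psi^+\otimes S^-$ is an $\alpha$-plane, or $\psi_1^-$ and $\psi_2^-$ are proportional, giving the $\beta$-plane $V=S^+\otimes\psi^-$. Geometrically this is precisely the statement that the two rulings of the Segre quadric $\mathbb{P}(S^+)\times\mathbb{P}(S^-)\subset\mathbb{P}(E)$ are the projectivised $\alpha$- and $\beta$-planes.

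For the last clause I would use the decomposition $\wedge^2 E\cong\mathrm{Sym}^2 S^+\oplus\mathrm{Sym}^2 S^-$, coming from $\wedge^2(S^+\otimes S^-)=(\wedge^2 S^+\otimes\mathrm{Sym}^2 S^-)\oplus(\mathrm{Sym}^2 S^+\otimes\wedge^2 S^-)$ together with $\wedge^2 S^\pm\cong\langle\epsilon^\pm\rangle$, fixing the orientation so that (via the metric) $\wedge_-$ is identified with $\mathrm{Sym}^2 S^-$. The induced quadratic form on $\mathrm{Sym}^2 S^-$ is the discriminant, whose null elements are exactly the perfect squares $(\psi^-)^2$. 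A short computation shows that the bivector of a $\beta$-plane $S^+\otimes\psi^-$ equals $(\chi_1^+\wedge\chi_2^+)\otimes(\psi^-)^2\in\wedge_-$, a perfect square and hence null, while conversely every null element of $\wedge_-$ is such a perfect square $(\psi^-)^2$ and recovers the $\beta$-plane $S^+\otimes\psi^-$.

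The only genuine content beyond bookkeeping is the converse classification, and the crux there is the identification \emph{null $=$ rank-one} via the determinant; once that is in place the ``common factor'' conclusion is just the determinant identity for a pencil of rank-one $2\times2$ matrices. I expect the main thing to watch is the self-dual/anti-self-dual orientation convention, which must be chosen so that it is the null locus of $\wedge_-$ (rather than of $\wedge_+$) that parametrises $\beta$-planes; this is exactly the identification $\wedge_-\cong\mathrm{Sym}^2 S^-$ made above.
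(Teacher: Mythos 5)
Your proof is correct, and it is essentially the argument the paper itself invokes: the paper gives no proof of this lemma, deferring to ``the same argument as in the four-dimensional case'' of LeBrun--Mason, and your fibrewise spinor computation (metric $=$ determinant on $S^+_x\otimes S^-_x$, null $=$ rank one, the determinant factorization for a pencil of rank-one matrices, and $\wedge_-\cong\mathrm{Sym}^2S^-$ with null elements the perfect squares) is precisely that standard argument written out on the 4-dimensional fibre $E_x$. Your closing remark about fixing the orientation so that $\beta$-planes correspond to $\wedge_-$ rather than $\wedge_+$ is exactly the right convention to watch.
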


The bundle of $\beta$-planes
\begin{equation}
	\widetilde{\mathcal{P}} \eqdef \bigcup_{x \in \mathcal{M}^\C} \mathbb{P}(S^-_x)
\end{equation}
is a holomorphic $\mathbb{CP}^1$-bundle over the 5-dimensional manifold $\mathcal{M}^\C$.
We denote the projection $\widetilde{\mathcal{P}} \to \mathcal{M}^\C$ by $\pi$.

\begin{definition}
Let $\widetilde{\mathcal{E}}$ be the distribution of rank 2 on the complex manifold $\widetilde{\mathcal{P}}$ defined by
\begin{equation}
	\widetilde{\mathcal{E}}_{\tilde{z}} \eqdef L_{\tilde{z}} (\mathcal{B}_{\tilde{z}}),\ \ \ \tilde{z} \in \widetilde{\mathcal{P}},
\end{equation}
where $\mathcal{B}_{\tilde{z}}$ is the $\beta$-plane corresponding to $\tilde{z} \in \widetilde{ \mathcal{P}}$ and the map $L_{\tilde{z}}$ is the horizontal lift
\begin{equation}
	L_{\tilde{z}} \colon T_{\pi(\tilde{z})} \mathcal{M}^\C \to T_{\tilde{z}} \widetilde{\mathcal{P}}
\end{equation}
with respect to the Levi-Civita connection of $g$.
We call this distribution $\widetilde{\mathcal{E}}$ the \textit{tautological distribution}.
\hfill{$\Box$}
\end{definition}

\begin{definition}[\cite{LM}, Definition 3.2]
A complex surface $\mathcal{S} \subset \mathcal{M}^\C$ is called a \textit{proto-$\beta$-surface} if the tangent space $T_x \mathcal{S}$ is a $\beta$-surface for any point $x \in \mathcal{S}$. 
Also, a proto-$\beta$-surface $\mathcal{S}$ is said to be \textit{maximal} if the surface is not a proper subset of any proto-$\beta$-surface.
A maximal proto-$\beta$-surface is called a \textit{$\beta$-surface}.
\hfill{$\Box$}

\end{definition}

The horizontal lift of a $\beta$-surface is an integral surface of the tautological distribution $\widetilde{\mathcal{E}}$. 
Therefore, it follows that:

\begin{lemma}\label{lem:37}
The following are equivalent:
\begin{enumerate}
\item The tautological distribution $\widetilde{\mathcal{E}}$ is integrable.
\item For any $x \in \mathcal{M}^\C$ and any $\beta$-plane $\mathcal{B}$ in $T_x \mathcal{M}^\C$, there exists a $\beta$-surface through $x$ whose tangent space at $x$ is $\mathcal{B}$.
\end{enumerate}
\end{lemma}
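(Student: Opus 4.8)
The plan is to read off the statement from the geometry of the bundle $\pi \colon \widetilde{\mathcal{P}} \to \mathcal{M}^\C$ together with a single application of the (holomorphic) Frobenius theorem. The guiding observation is that a point $\tilde{z} \in \widetilde{\mathcal{P}}$ is exactly the same datum as a pair consisting of a base point $x \eqdef \pi(\tilde{z})$ and a $\beta$-plane $\mathcal{B}_{\tilde{z}} \subset T_x \mathcal{M}^\C$, and that by construction the tautological fibre $\widetilde{\mathcal{E}}_{\tilde{z}} = L_{\tilde{z}}(\mathcal{B}_{\tilde{z}})$ is \emph{horizontal}. Consequently $d\pi$ restricts to an isomorphism of $\widetilde{\mathcal{E}}_{\tilde{z}}$ onto $\mathcal{B}_{\tilde{z}}$, which is what lets me pass back and forth between integral surfaces of $\widetilde{\mathcal{E}}$ upstairs and proto-$\beta$-surfaces downstairs.

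For the implication $(1) \Rightarrow (2)$, I fix $x$ and a $\beta$-plane $\mathcal{B}$ at $x$, and let $\tilde{z} \in \widetilde{\mathcal{P}}$ be the corresponding point, so that $\pi(\tilde{z}) = x$ and $\mathcal{B}_{\tilde{z}} = \mathcal{B}$. Assuming $\widetilde{\mathcal{E}}$ integrable, Frobenius supplies an integral surface $\widehat{\mathcal{S}}$ through $\tilde{z}$, i.e.\ a complex surface with $T_{\tilde{w}} \widehat{\mathcal{S}} = \widetilde{\mathcal{E}}_{\tilde{w}}$ for every $\tilde{w} \in \widehat{\mathcal{S}}$. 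Since each $\widetilde{\mathcal{E}}_{\tilde{w}}$ is horizontal, $d\pi$ is injective on it, so $\pi|_{\widehat{\mathcal{S}}}$ is an immersion near $\tilde{z}$ and $\mathcal{S} \eqdef \pi(\widehat{\mathcal{S}})$ is a complex surface. At $y = \pi(\tilde{w})$ its tangent space is $T_y \mathcal{S} = d\pi\left( \widetilde{\mathcal{E}}_{\tilde{w}} \right) = d\pi\left( L_{\tilde{w}}(\mathcal{B}_{\tilde{w}}) \right) = \mathcal{B}_{\tilde{w}}$, a $\beta$-plane; hence $\mathcal{S}$ is a proto-$\beta$-surface with $T_x \mathcal{S} = \mathcal{B}_{\tilde{z}} = \mathcal{B}$. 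Taking $\widehat{\mathcal{S}}$ to be the maximal integral leaf through $\tilde{z}$, its projection is a maximal proto-$\beta$-surface, that is, a $\beta$-surface through $x$ with tangent space $\mathcal{B}$ at $x$.

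For the reverse implication $(2) \Rightarrow (1)$, I take an arbitrary $\tilde{z} \in \widetilde{\mathcal{P}}$, put $x = \pi(\tilde{z})$ and $\mathcal{B} = \mathcal{B}_{\tilde{z}}$, and use the hypothesis to produce a $\beta$-surface $\mathcal{S}$ through $x$ with $T_x \mathcal{S} = \mathcal{B}$. By the observation recorded just before the lemma, the horizontal lift $\widehat{\mathcal{S}}$ of $\mathcal{S}$ is an integral surface of $\widetilde{\mathcal{E}}$; its point over $x$ is the one determined by the $\beta$-plane $T_x \mathcal{S} = \mathcal{B}$, namely $\tilde{z}$ itself, so $\tilde{z} \in \widehat{\mathcal{S}}$ and $T_{\tilde{z}} \widehat{\mathcal{S}} = \widetilde{\mathcal{E}}_{\tilde{z}}$. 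Thus through every point of $\widetilde{\mathcal{P}}$ there passes a $2$-dimensional integral surface of the rank-$2$ distribution $\widetilde{\mathcal{E}}$, and the Frobenius theorem then forces $\widetilde{\mathcal{E}}$ to be integrable.

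The only genuinely substantive point — and the one I would flag as the main obstacle — is the compatibility of the two operations ``project an integral surface by $\pi$'' and ``lift a $\beta$-surface by its Gauss section'', together with the matching of tangent data. This rests entirely on the horizontality of $\widetilde{\mathcal{E}}$ (making $d\pi$ an isomorphism on it, so that projection is immersive and preserves $\beta$-planes) and on the already-established fact that the horizontal lift of a $\beta$-surface is an integral surface, i.e.\ that the spinor field defining the tangent $\beta$-planes of a $\beta$-surface is parallel along it. Granting these, everything else is the formal Frobenius dictionary between integrability of $\widetilde{\mathcal{E}}$ and the existence, through each point of $\widetilde{\mathcal{P}}$, of an integral leaf with the prescribed tangent space, combined with the identification of $\widetilde{\mathcal{P}}$ with the space of pairs (point, $\beta$-plane).
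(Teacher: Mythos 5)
Your proof is correct and follows essentially the same route as the paper, which states the lemma as an immediate consequence of the observation that the horizontal (Gauss) lift of a $\beta$-surface is an integral surface of $\widetilde{\mathcal{E}}$. You have simply made explicit the Frobenius dictionary and the projection/lift correspondence that the paper leaves implicit.
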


To give a sufficient condition for the integrability of the tautological distribution $\widetilde{\mathcal{E}}$, we show the following lemmas.

\begin{lemma}\label{lem:38}
Assume that a 5-dimensional complex almost contact manifold $(\mathcal{M}^\C,\phi,\xi,\theta,g)$ satisfies
\begin{equation}
	R^0_- =0,\ \ \ W^-_-=0.
\end{equation}
Then, for any $\beta$-plane $\mathcal{B}$, if $v,w \in \mathcal{B}$, then $R(v,w)v \in \mathcal{B}$.
\end{lemma}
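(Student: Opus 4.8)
The plan is to exploit the curvature decomposition together with the two vanishing hypotheses $R^0_-=0$ and $W^-_-=0$. Fix a $\beta$-plane $\mathcal{B}$, which by the preceding lemma corresponds to a null element $\omega \in \wedge_-$; concretely, if $v,w$ span $\mathcal{B}$ then $v \wedge w$ is (up to scale) a self-dual$^-$ form, i.e. $v \wedge w \in \wedge_-$ and $*(v\wedge w)=-(v\wedge w)$. The claim $R(v,w)v \in \mathcal{B}$ is equivalent to saying that the vector $R(v,w)v$ has no component outside the 2-plane spanned by $v$ and $w$; since $\mathcal{B}$ is null, this is cleanest to phrase via the metric by showing $g(R(v,w)v, u)=0$ for every $u$ orthogonal to $\mathcal{B}$, or equivalently that a suitable 2-form built from $R(v,w)v$ lies back in $\wedge_-$.

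First I would translate the curvature operator acting on the 2-form $\omega = v\wedge w \in \wedge_-$ into the block-matrix language. Writing $R = W + K + (\text{scalar part})$ according to the standard decomposition of the curvature into Weyl, trace-free Ricci $K$, and scalar curvature, and partitioning each along $\wedge_+ \oplus \wedge_- \oplus \wedge_0$, the image $R\omega$ is governed by the columns $R^+_-$, $R^-_-$, $R^0_-$. The hypothesis $R^0_-=0$ kills the $\wedge_0$-component immediately. For the $\wedge_+$-component, I would use $W^-_-=0$ to control the Weyl contribution and Lemma \ref{lem:32}, which pins down $K^-_-=\tfrac{s'}{2}\,\mathrm{id}$, to control the trace-free Ricci contribution; the upshot should be that $R\omega$ stays inside $\wedge_+ \oplus \wedge_-$ and that its behaviour is dictated by genuinely only the $\wedge_-$-to-$\wedge_-$ and $\wedge_-$-to-$\wedge_+$ blocks. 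The final step is the algebraic identity, familiar from the four-dimensional LeBrun--Mason setting, that for a null self-dual$^-$ form $\omega$ corresponding to the null plane $\mathcal{B}$, the condition that $R\omega$ again be (proportional to something) annihilating $\mathcal{B}$ in the right sense is exactly what forces $R(v,w)v \in \mathcal{B}$.

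The cleanest route is probably spinorial: under the identification $E_x \cong S^+_x \otimes S^-_x$, a $\beta$-plane is $S^-_x \otimes \psi^-$, and $\wedge_- \cong \mathrm{Sym}^2 S^-$, so that $W^-_-$ is a symmetric form on $S^-$, the self-dual$^-$ Weyl spinor. Then $R(v,w)v$ for $v = \sigma \otimes \psi^-$, $w = \tau \otimes \psi^-$ can be expanded in spinor indices, and the statement $R(v,w)v \in \mathcal{B}$ becomes the assertion that the result carries the factor $\psi^-$ in its primed (or unprimed, depending on convention) index. In this language, the obstruction to $R(v,w)v$ lying in $\mathcal{B}$ is precisely a contraction of $\psi^-\psi^-\psi^-$ against the totally symmetric Weyl spinor $W^-_-$ plus a contraction against the $\wedge_0$-coupling term $R^0_-$; both vanish by hypothesis, while the $K^-_-$ and scalar pieces are pure multiples of the identity on $\wedge_-$ and hence send $\omega$ back into $\wedge_-$, i.e. keep the output in $\mathcal{B}$.

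The main obstacle I anticipate is bookkeeping rather than conceptual: in the genuinely 5-dimensional setting the $\wedge_0$-block is nonzero in general (this is exactly the novelty flagged after Lemma \ref{lem:31}), so I must be careful that the extra $E^*\wedge\theta$ directions do not leak a component of $R(v,w)v$ transverse to $E_x$, and that the reduction ``mod $\wedge_0$'' used in Lemmas \ref{lem:31} and \ref{lem:32} is compatible with the exact (not merely mod-$\wedge_0$) statement $R(v,w)v \in \mathcal{B}$. Here the hypothesis $R^0_-=0$ is doing the essential work, promoting a mod-$\wedge_0$ statement to an honest one along the $\wedge_-$ column. Verifying that $W^-_-=0$ together with $R^0_-=0$ jointly eliminate every component of $R(v,w)v$ outside $\mathcal{B}$ — and not merely outside $E_x$ — is the step I would check most carefully.
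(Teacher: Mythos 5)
Your proposal is correct and takes essentially the same route as the paper's proof: reduce $R(v,w)v\in\mathcal{B}$ to the three orthogonality conditions against $v$, $w$, $\xi$ (which span $\mathcal{B}^\perp = \mathcal{B}\oplus\langle\xi\rangle$), identify $v\wedge w$ with a null element of $\wedge_-$, kill the $\xi$-pairing with $R^0_-=0$, and kill the $w$-pairing with $W^-_-=0$ together with Lemma \ref{lem:32}, whose multiple-of-the-identity trace part pairs to zero against the null form $v\wedge w$. One slip of wording to note: $W^-_-$ and $K^-_-$ control the $\wedge_-$-component of $R(v\wedge w)$, not the $\wedge_+$-component, which is unconstrained by the hypotheses but harmless since it is orthogonal to both $v\wedge w$ and $\wedge_0$ --- a point your spinorial paragraph implicitly gets right.
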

\begin{proof}

The condition $R(v,w)v \in \mathcal{B}$ is satisfied if and only if the equations
\begin{align}
	g(R(v,w)v, v) &= 0, \label{eq:102002} \\
	g(R(v,w)v, w) &= 0, \label{eq:102003} \\
	g(R(v,w)v, \xi) &= 0 \label{eq:102004}
\end{align}
hold since $\mathcal{B} =(\mathcal{B} \oplus \langle \xi \rangle)^\perp $.
From the Bianchi identity, the equation \ref{eq:102002} always holds.
Hence, it is enough to show that the equations \eqref{eq:102003} and \eqref{eq:102004} hold for any $\beta$-plane $\mathcal{B}$.

For two vectors $v, w \in \mathcal{B}$, the wedge product $v \wedge w$ can be identified a null element in $(\wedge_-)^*$.
Thus, from Lemma \ref{lem:32}, we have
\begin{align}
	g(R(v,w)v,w) &=- \left( R(v \wedge w) ,v \wedge w \right) \notag \\
	&=- \left( W(v \wedge w), v \wedge w \right).
\end{align}
Hence, from the assumption $W^-_-=0$, we have
\begin{equation}
	g(R(v,w)v,w) =0.
\end{equation}
Similarly, from the assumption $R^0_- =0$, we have
\begin{equation}
	g(R(v,w)v,\xi) =0.
\end{equation}
Therefore, we obtain that
\begin{equation}
	R(v,w)v \in ( \langle v,w,\xi \rangle )^\perp =\mathcal{B},
\end{equation}
as desired.
\end{proof}

\begin{remark*}
The reverse claim of Lemma \ref{lem:38} does not hold, i.e.
the condition $R(v,w)v \in \mathcal{B}$ for any $v,w \in \mathcal{B}$ does not mean $R^0_- =W^-_- =0$.
For the details, see Appendix A.
\hfill{$\Box$}
\end{remark*}

\begin{lemma}\label{lem:39}
Let $(\mathcal{M}^\C,\phi,\xi,\theta,g)$ be a 5-dimensional complex almost contact manifold.
For any $\beta$-plane $\mathcal{B}$, we assume that $R(v,w)v \in \mathcal{B}$ for any two vectors $v,w$ tangent to the $\beta$-plane $\mathcal{B}$.
Then, for any $\beta$-plane $\mathcal{B}$, there exists a $\beta$-surface $\mathcal{S}$ which is tangent to $\mathcal{B}$.

Also, if we assume that $(\mathcal{M}^\C,\phi,\xi,\theta,g)$ is a complex K-contact manifold or the distribution $\mathrm{Ker}\ \theta$ is integrable, then the converse claim holds.
\end{lemma}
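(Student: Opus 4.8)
The plan is to reduce both claims to the involutivity of the tautological distribution $\widetilde{\mathcal{E}}$ and to read the obstruction off a Frobenius computation on $\widetilde{\mathcal{P}}$. By Lemma \ref{lem:37}, the existence of a $\beta$-surface tangent to each $\beta$-plane is equivalent to $\widetilde{\mathcal{E}}$ being integrable, so it is enough to decide when the Lie bracket of two horizontal lifts framing $\widetilde{\mathcal{E}}$ stays in $\widetilde{\mathcal{E}}$. Fix $\tilde{z}_0=(x_0,[\psi])$ with $\psi\in S^-_{x_0}$, a frame $s_1,s_2$ of $S^+$ near $x_0$, and set $v_a=s_a\otimes\psi$, so that $\mathcal{B}=S^+\otimes\psi$ and $\widetilde{X}_a \eqdef L_{\tilde{z}}(v_a)$, where now $\psi$ denotes the tautological section of $\pi^*S^-$ (horizontal, so that $\nabla^{S^-}_{v}\psi\in\langle\psi\rangle$ along lifted directions). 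Then $[\widetilde{X}_1,\widetilde{X}_2]\in\widetilde{\mathcal{E}}_{\tilde{z}_0}$ precisely when both its vertical component and the component of its horizontal part transverse to $\mathcal{B}$ vanish.

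First I would isolate the vertical component. On the associated bundle $\widetilde{\mathcal{P}}=\mathbb{P}(S^-)$ this is the curvature term: the image in $T_{[\psi]}\mathbb{P}(S^-)\cong\mathrm{Hom}(\langle\psi\rangle,\,S^-/\langle\psi\rangle)$ of $R^{S^-}(v_1,v_2)\psi$, where $R^{S^-}$ is the curvature of the connection induced on $S^-$ by the induced connection $\nabla^E=\mathrm{proj}_E\circ\nabla$ on $E$. It vanishes iff $R^{S^-}(v_1,v_2)\psi\in\langle\psi\rangle$. Using the splitting $E\cong S^+\otimes S^-$ and $R^E=R^{S^+}\otimes\mathrm{id}+\mathrm{id}\otimes R^{S^-}$, this is exactly the assertion that the $E$-component of $R^E(v_1,v_2)v_1$ lies in $\mathcal{B}$; and since $R^E$ differs from the restriction of the full curvature $R$ only by the second fundamental form of $E$, it is the $E$-part of the hypothesis $R(v,w)v\in\mathcal{B}$ modulo that correction.

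Next I would compute the horizontal part via the structure formula $\mathcal{H}[\widetilde{X}_1,\widetilde{X}_2]=L\!\left(\nabla^\pi_{\widetilde{X}_1}v_2-\nabla^\pi_{\widetilde{X}_2}v_1\right)$. Splitting $\nabla_{v_a}v_b=\nabla^E_{v_a}v_b+g(\nabla_{v_a}v_b,\xi)\,\xi$, the horizontality of $\psi$ together with the same spinor bookkeeping puts the $E$-part automatically into $\mathcal{B}$, while the transverse part collapses to $g([v_1,v_2],\xi)\,\xi=-d\theta(v_1,v_2)\,\xi$. Hence integrability of $\widetilde{\mathcal{E}}$ is equivalent to the vertical condition together with $d\theta(v_1,v_2)=0$ on every $\beta$-plane. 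I expect this $\xi$-term to be the main obstacle: the algebraic curvature quantity $g(R(v,w)v,\xi)=0$ furnished by the hypothesis constrains the block $R^0_-$, whereas $d\theta|_{\mathcal{B}}$ is a first-order quantity measuring the failure of $E=\mathrm{Ker}\,\theta$ to be integrable, and the two must be reconciled before the chain closes up.

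This reconciliation is exactly what the two distinguished hypotheses supply, and it is how I would settle the converse. If $\mathrm{Ker}\,\theta$ is integrable then $d\theta|_E=0$, so the transverse $\xi$-term disappears outright; if the manifold is complex K-contact then $d\theta=2g(\cdot,\phi\,\cdot)$ is, up to scale, the fundamental form, which is self-dual on $E$ and therefore pairs to zero with every anti-self-dual null $\beta$-plane, so again $d\theta|_{\mathcal{B}}=0$. With $d\theta|_{\mathcal{B}}=0$ in force the second fundamental form of $E$ drops out along $\beta$-planes, so $R^E$ and the restriction of $R$ agree there; integrability of $\widetilde{\mathcal{E}}$ then returns the vanishing of the vertical obstruction, which reads off as the $E$-part of $R(v,w)v\in\mathcal{B}$, while the remaining $\xi$-part $g(R(v,w)v,\xi)=0$ follows from the same structural identities (for K-contact, from $\nabla_X\xi=-\phi X$ and the resulting expression for $R(v,w)\xi$). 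For the forward implication I would run the chain in the other direction: the hypothesis kills the vertical obstruction, the vanishing of $d\theta|_{\mathcal{B}}$ kills the transverse $\xi$-term, and Lemma \ref{lem:37} then produces the desired $\beta$-surfaces.
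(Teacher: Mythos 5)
Your Frobenius analysis on $\widetilde{\mathcal{P}}$ is essentially sound, and it correctly isolates \emph{two} independent obstructions to integrability of $\widetilde{\mathcal{E}}$: the vertical (spinor-curvature) condition and the vanishing of $d\theta\rvert_{\mathcal{B}}$ on every $\beta$-plane $\mathcal{B}$. But this creates a genuine gap in your treatment of the forward implication, which is the part of the lemma carrying \emph{no} K-contact or integrability hypothesis: your closing sentence invokes ``the vanishing of $d\theta\rvert_{\mathcal{B}}$'' to kill the transverse $\xi$-term, yet nothing in the forward hypothesis supplies it. The assumption $R(v,w)v\in\mathcal{B}$ is a pointwise algebraic condition on the curvature and cannot control the first-order quantity $(d\theta\rvert_E)^-$: on flat $\C^5$ with a unit-norm $\theta$ whose $d\theta\rvert_E$ is decomposable and nonzero, the curvature hypothesis holds vacuously while $d\theta\rvert_{\mathcal{B}}\neq 0$ for some $\beta$-plane, and then no $\beta$-surface can be tangent to that plane, since tangency of brackets forces $\theta([v,w])=0$ and hence $d\theta\rvert_{\mathcal{B}}=0$. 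So within your (correct) framework the forward implication is provable only under the additional assumption $(d\theta\rvert_E)^-=0$; it cannot simply be invoked. This transverse term is exactly what is invisible in the 4-dimensional argument of \cite{LM} to which the paper appeals for this direction --- there $\mathcal{B}^\perp=\mathcal{B}$, whereas here $\mathcal{B}^\perp=\mathcal{B}\oplus\langle\xi\rangle$ --- and since the paper omits the forward proof entirely, your computation actually exposes a condition the paper never addresses rather than reproving its claim.

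For the converse, your route is also genuinely different from the paper's and heavier than necessary. The paper never touches $R^{S^-}$ or the Gauss equation: given a $\beta$-surface $\mathcal{S}$ tangent to $\mathcal{B}$, it proves $\mathcal{S}$ is totally geodesic --- nullity and tangency give $g(\nabla_vw,v)=g(\nabla_vw,w)=0$, and the extra hypothesis gives $g(\nabla_vw,\xi)=0$ (for K-contact, $g(\nabla_vw,\xi)=g(w,\phi(v))=\tfrac12 d\theta(w,v)=-\tfrac12\theta([w,v])=0$, using only that $[w,v]$ is tangent to $\mathcal{S}$) --- whence $R(v,w)v=\nabla_v\nabla_wv-\nabla_w\nabla_vv-\nabla_{[v,w]}v$ lies in $\mathcal{B}$ in one stroke, $\xi$-component included. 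Two concrete weak points of your version by comparison: your claim that K-contact makes $d\theta$ self-dual presupposes an orientation convention tying $\phi$ to the orientation defining $\beta$-planes, which the paper never fixes (its own derivation of $d\theta\rvert_{\mathcal{B}}=0$ uses tangency alone and is convention-free); and your reassembly of the full condition $R(v,w)v\in\mathcal{B}$ from the vertical obstruction needs the second-fundamental-form corrections of $E$ to vanish along $\mathcal{B}$, which requires $\mathcal{L}_\xi g=0$ (K-contact) and is not justified in your integrable-$\mathrm{Ker}\,\theta$ case --- the symmetric part of that second fundamental form is not controlled by $d\theta$ --- while the remaining $\xi$-part is delegated to unspecified ``structural identities.''
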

\begin{proof}
We omit the proof of the existence of the $\beta$-surface because it can be shown in a similar to the 4-dimensional case.
See \cite{LM} for the details.

Next, for any $\beta$-plane $\mathcal{B}$, we assume that there exists a $\beta$-surface $\mathcal{S}$ which is tangent to $\mathcal{B}$.
It is enough to show that the covariant derivative $\nabla_v w$ is in $\mathcal{B}$ for any two vectors $v,w \in \mathcal{B}$.
Since the $\beta$-plane $\mathcal{B}$ is a tangent space of the integral surface $\mathcal{S}$, we have
\begin{equation}
	\nabla_v w -\nabla_w v =[v,w] \in \mathcal{B}.
\end{equation}
Since $\beta$-planes are null planes, we have
\begin{align}
	g(\nabla_v w, v) &= g(\nabla_w v, v) = \frac{1}{2} w(g(v,v)) =0, \\
	g(\nabla_v w, w) &= v(g(w,w)) =0.
\end{align}
Hence, we need to show that
\begin{equation}
	g(\nabla_v w, \xi) =0. \label{eq:133}
\end{equation}

First, we assume that the distribution $\mathrm{Ker}\ \eta$ is integrable.
In this case, the 5-dimensional manifold $\mathcal{M}^\C$ is decomposed into a direct product.
Therefore, the equation \eqref{eq:133} holds.

Finally, we assume that $(\mathcal{M}^\C,\phi,\xi,\theta,g)$ is a complex K-contact manifold.
For any K-contact manifold,
\begin{equation}
	\nabla_v \xi =-\phi(v)
\end{equation}
holds (cf. \cite{B}).
Thus, we have
\begin{align}
	g(\nabla_v w, \xi) &= -g(w, \nabla_v \xi ) \notag \\
	&=g(w,\phi(v)) \notag \\
	&=\frac{1}{2} d\theta (w,v) \notag \\
	&= -\frac{1}{2} \theta([w,v])=0.
\end{align}
Therefore, the equation \eqref{eq:133} holds.
\end{proof}

From Lemma \ref{lem:37}, Lemma \ref{lem:38}, and Lemma \ref{lem:39}, the following proposition immediately follows:

\begin{proposition}\label{prop:310}
Assume that a 5-dimensional complex almost contact manifold satisfies
\begin{equation}
	R^0_- =0,\ \ \ W^-_-=0.
\end{equation}
Then, the tautological distribution $\widetilde{\mathcal{E}}$ is integrable.
\end{proposition}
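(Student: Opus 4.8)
The plan is to chain the three preceding lemmas, since the hypotheses of the proposition are precisely the hypotheses of Lemma \ref{lem:38}. First I would invoke Lemma \ref{lem:38}: the assumptions $R^0_- = 0$ and $W^-_- = 0$ guarantee that for every $\beta$-plane $\mathcal{B}$ and every pair of vectors $v, w \in \mathcal{B}$ one has $R(v,w)v \in \mathcal{B}$. This pointwise, geodesic-type curvature condition is exactly the hypothesis required by the existence half of Lemma \ref{lem:39}, which then produces, through each point $x \in \mathcal{M}^\C$ and tangent to each prescribed $\beta$-plane $\mathcal{B} \subset T_x \mathcal{M}^\C$, a $\beta$-surface $\mathcal{S}$ satisfying $T_x \mathcal{S} = \mathcal{B}$.

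Next I would observe that this existence statement is verbatim condition (2) in Lemma \ref{lem:37}. Since Lemma \ref{lem:37} asserts the equivalence of condition (2) with condition (1), namely the integrability of the tautological distribution $\widetilde{\mathcal{E}}$, the desired conclusion follows immediately. Thus the whole argument is a single composition of implications: hypotheses $\Rightarrow$ Lemma \ref{lem:38} $\Rightarrow$ Lemma \ref{lem:39} (existence) $\Rightarrow$ condition (2) of Lemma \ref{lem:37} $\Rightarrow$ integrability.

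The proof of the proposition itself presents no real obstacle, because all of the substantive content has already been isolated into the earlier lemmas. The genuine difficulties lie upstream and are assumed here: in Lemma \ref{lem:38}, one must translate the block-matrix vanishing $R^0_- = W^-_- = 0$ into the condition $R(v,w)v \in \mathcal{B}$, which proceeds by identifying $v \wedge w$ with a null element of $\wedge_-$ and invoking Lemma \ref{lem:32} to control the trace-free Ricci contribution; and in Lemmas \ref{lem:37} and \ref{lem:39}, integral surfaces of $\widetilde{\mathcal{E}}$ must be matched with $\beta$-surfaces in $\mathcal{M}^\C$ via the horizontal lift. Granting those lemmas, the present statement is a purely formal consequence, which is why I would expect to write it out in just a few lines rather than engage in any new computation.
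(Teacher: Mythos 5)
Your proposal is correct and follows exactly the paper's own argument: the paper states that the proposition ``immediately follows'' from Lemma \ref{lem:37}, Lemma \ref{lem:38}, and Lemma \ref{lem:39}, chained in precisely the order you describe (curvature hypotheses $\Rightarrow$ $R(v,w)v \in \mathcal{B}$ $\Rightarrow$ existence of $\beta$-surfaces $\Rightarrow$ integrability). Nothing further is needed.
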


\section{4-dimensional twistor spaces}

In this section, we show the correspondence between 4-dimensional twistor spaces and certain 5-dimensional complex almost contact manifolds.
To this end, we define 4-dimensional twistor spaces.

\begin{definition}\label{def:41}
A 4-dimensional complex manifold $\mathcal{P}$ is called a \textit{4-dimensional twistor space} if there exists a rational curve $C \subset \mathcal{P}$ whose normal bundle satisfies  
\begin{equation}
	N_{C/\mathcal{P}} \cong \mathcal{O} \oplus \mathcal{O}(1) \oplus \mathcal{O}(1).
\end{equation}
In this situation, the rational curve $C$ is called a \textit{twistor line}.
\hfill{$\Box$}
\end{definition}

We apply the deformation theory of submanifolds\cite{K} to a twistor line in a 4-dimensional twistor space.
Since the 1st cohomology group of the normal bundle of a twistor line $C$ vanishes, there is a complex analytic family $\{ C_x \}_{x \in \mathcal{M}^\C}$, where $\mathcal{M}^\C$ is a 5-dimensional complex manifold.
Also, the tangent space of $\mathcal{M}^\C$ at a twistor line $C$ is
\begin{equation}
	T_C \mathcal{M}^\C \cong H^0(C,N_{C/\mathcal{P}}) \cong H^0(C, \mathcal{O} \oplus \mathcal{O}(1) \oplus \mathcal{O}(1) )\cong \C^5. \label{eq:3333}
\end{equation}
By considering the ``null cone'' of the tangent space, we can define  almost contact structures on $\mathcal{M}^\C$.
We define the set $\mathcal{N}_x$ as
\begin{equation}
	\mathcal{N}_x \eqdef \left\{ v \in T_x \mathcal{M}^\C \setmid \text{The section}\ s_v\ \text{has a zero.} \right\} \label{eq:60606}
\end{equation}
for each point $x \in \mathcal{M}^\C$, where the section $s_v \in H^0(C,N_{C/\mathcal{P}})$ is the corresponding section to the vector $v$.

\begin{lemma}\label{lem:42}
In the above situation, the set $\mathcal{N}_x$ defines a quadratic cone in a hyperplane in the tangent space $T_x \mathcal{M}^\C$.
Thus, the defining functions of $\mathcal{N}_x$ are a linear polynomial and a quadratic polynomial on $T_x \mathcal{M}^\C$.
The linear polynomial defines the 1-form $\theta$ and the quadratic polynomial defines the degenerate bilinear form $g_0$.
\end{lemma}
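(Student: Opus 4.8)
The plan is to understand the zero locus of sections of the normal bundle $N_{C/\mathcal{P}} \cong \mathcal{O} \oplus \mathcal{O}(1) \oplus \mathcal{O}(1)$ over $C \cong \mathbb{CP}^1$, since $\mathcal{N}_x$ consists precisely of those $v \in T_x \mathcal{M}^\C \cong H^0(C, N_{C/\mathcal{P}})$ whose associated section $s_v$ vanishes somewhere on $C$. First I would fix the decomposition and write a global section in coordinates: using the standard affine coordinate $\zeta$ on $C$, a section of $\mathcal{O}$ is a constant $a \in \C$, while a section of each $\mathcal{O}(1)$ is an affine-linear form $b_0 + b_1 \zeta$ and $c_0 + c_1 \zeta$ respectively. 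This gives the identification $H^0(C, N_{C/\mathcal{P}}) \cong \C^5$ of \eqref{eq:3333} explicitly, with coordinates $(a, b_0, b_1, c_0, c_1)$.

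Next I would compute the condition for $s_v$ to have a zero. A zero is a point $\zeta \in C$ where all three components vanish simultaneously. The $\mathcal{O}$-component vanishes identically iff $a = 0$, and is otherwise nowhere zero; so the first requirement is the linear condition $a = 0$, which cuts out a hyperplane in $T_x \mathcal{M}^\C$. This linear polynomial is what defines the 1-form $\theta$ (up to scale), matching the decomposition $T^*\mathcal{M} = E^* \oplus \langle \theta \rangle$ and the role of $E = \mathrm{Ker}\,\theta$ from Section 3. Restricted to this hyperplane $a = 0$, the section reduces to the pair $(b_0 + b_1\zeta,\, c_0 + c_1\zeta)$ valued in $\mathcal{O}(1) \oplus \mathcal{O}(1)$, and I would then ask when these two affine-linear forms share a common root. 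Two linear forms $b_0 + b_1 \zeta$ and $c_0 + c_1 \zeta$ have a common zero on $\mathbb{CP}^1$ precisely when their resultant vanishes, i.e. when
\begin{equation}
	b_0 c_1 - b_1 c_0 = 0.
\end{equation}
This is a nondegenerate quadratic form on the four coordinates $(b_0, b_1, c_0, c_1)$, which I would identify with the degenerate bilinear form $g_0$ on $T_x\mathcal{M}^\C$ (degenerate because it is only defined on the hyperplane $E$ and is independent of the $\theta$-direction).

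Thus the locus $\mathcal{N}_x$ is cut out inside $T_x \mathcal{M}^\C$ by the simultaneous vanishing of the linear polynomial $a$ and the quadratic polynomial $b_0 c_1 - b_1 c_0$; the former defines a hyperplane, and the latter defines a quadric cone within it, which is exactly the claimed description. The careful point — the part I expect to require the most attention rather than genuine difficulty — is justifying that the common-root condition is \emph{exactly} the resultant and checking the edge cases where a form is constant (so that its zero escapes to $\zeta = \infty$); one must treat $C$ as $\mathbb{CP}^1$ rather than $\C$ and verify that the resultant $b_0 c_1 - b_1 c_0$ correctly captures a shared zero at infinity as well, which it does when one homogenizes the linear forms. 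The remaining step is invariance: the identification $H^0(C, N_{C/\mathcal{P}}) \cong \C^5$ depends on the chosen trivializations and the coordinate $\zeta$, so I would note that a different choice of splitting of $N_{C/\mathcal{P}}$ or of affine chart rescales $a$ and applies an $\mathrm{SL}_2$-type change to the pair of linear forms, leaving the zero-divisor $b_0 c_1 - b_1 c_0$ invariant up to a nonzero scalar. This is why $\theta$ and $g_0$ are only well-defined up to scaling, consistent with the equivalence class of almost contact structures introduced in Definition \ref{def:24}.
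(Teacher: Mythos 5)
Your proposal is correct and follows essentially the same route as the paper: split $s_v$ into its $H^0(C,\mathcal{O})$ and $H^0(C,\mathcal{O}(1)\oplus\mathcal{O}(1))$ components, observe that the constant component must vanish identically (the linear condition giving $\theta$), and express the common-zero condition for the two degree-one components as the vanishing of a $2\times 2$ determinant (the quadratic form $g_0$, degenerate because it ignores the $\mathcal{O}$-direction). Your added care about zeros at $\zeta=\infty$ via homogenization and about independence of trivialization is a sound elaboration of what the paper leaves implicit, not a different argument.
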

\begin{proof}
We regard a vector $v \in T_x \mathcal{M}^\C$ as a holomorphic section $s_v$ of the holomorphic vector bundle by using the isomorphism \eqref{eq:3333}.
The holomorphic section $s_v$ can be written as
\begin{equation}
	s_v=((s_v)_1,(s_v)_2) \in H^0 (C_x, \mathcal{O}) \oplus H^0(C_x, \mathcal{O}(1) \oplus \mathcal{O}(1)),
\end{equation}
where the holomorphic section $(s_v)_1 \in H^0 (C_x, \mathcal{O}) \cong \C$ is a constant and $(s_v)_2$ is a holomorphic section of $H^0(C_x, \mathcal{O}(1) \oplus \mathcal{O}(1))$.
The vector $v$ is in $\mathcal{N}_x$ if and only if $(s_v)_1 \equiv 0$ and $(s_v)_2$ has a zero.
The former condition defines a linear polynomial $\theta$ on $T_x \mathcal{M}^\C$.
The latter condition is given by the determinant of a $2 \times 2$ matrix. 
Thus, the latter condition defines a quadratic form $g_0$ on $T_x \mathcal{M}^\C$.
Since the quadratic form $g_0$ vanishes on $H^0 (C_x, \mathcal{O})$, $g_0$ is degenerate. 
\end{proof}

\begin{remark*}
For a holomorphic tensor field $\Psi$, we define the \textit{class of holomorphic tensor fields $[\Psi]$} as
\[
	[\Psi] \eqdef \left\{ f\Psi \setmid \text{$f$ is a non-vanishing holomorphic function} \right\}.
\]
Let a 1-form $\theta$ and a symmetric bilinear form $g_0$ be tensors given in Lemma \ref{lem:42}.
For arbitrary non-vanishing functions $f$ and $F$ on $\mathcal{M}^\C$, the 1-from $f \theta$ and the symmetric bilinear form $F g_0$ defines the same null cone $\mathcal{N}_x$.
Thus, Lemma \ref{lem:42} gives the class of 1-forms $[\theta]$ and the class of bilinear forms $[g_0]$ on $\mathcal{M}^\C$.
\hfill{$\Box$}
\end{remark*}

\begin{proposition}\label{thm:01}
Let $\mathcal{P}$ be a 4-dimensional twistor space, $C$ a twistor line in $\mathcal{P}$, and $\mathcal{M}^\C$ the above 5-dimensional complex manifold.
Then, we can construct the equivalent class of complex almost contact structures on $\mathcal{M}^\C$.
\end{proposition}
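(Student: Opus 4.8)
The plan is to promote the null-cone data of Lemma~\ref{lem:42} to a genuine almost contact quadruple by supplying the one piece of data it does not see---a transversal direction $\xi$ together with a compatible $(1,1)$-tensor $\phi$---and then to show that the residual freedom in these choices is exactly the freedom recorded by Lemma~\ref{lem:23}, so that the output is a single class in the sense of Definition~\ref{def:24}.

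First I would isolate the invariant input. By Lemma~\ref{lem:42} and the remark following it, the cone $\mathcal{N}_x$ canonically determines a class of $1$-forms $[\theta]$ and a class of bilinear forms $[g_0]$; I would note that these are holomorphic tensor fields, since the family $\{C_x\}$ is complex analytic and the fibrewise sequence $0\to H^0(\mathcal{O}(1)\oplus\mathcal{O}(1))\to H^0(N_{C/\mathcal{P}})\to H^0(\mathcal{O})\to 0$ has locally constant ranks. In particular $E\eqdef\ker\theta$ is a holomorphic rank-$4$ distribution and $g_0$ restricts to a \emph{nondegenerate} holomorphic symmetric form on $E$ (it is the determinant pairing on $H^0(\mathcal{O}(1)\oplus\mathcal{O}(1))\cong S^+\otimes S^-$). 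I fix representatives $\theta$ and $g_0$.

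Next I would make the two auxiliary choices and check the axioms. Choosing a holomorphic transversal $\xi$ with $\theta(\xi)=1$ so that $T\mathcal{M}^\C=E\oplus\langle\xi\rangle$ as in \eqref{eq:008}, I set $g\eqdef g_0+\theta\otimes\theta$, where $g_0$ is extended by $0$ on $\langle\xi\rangle$; then $g$ is nondegenerate, $g|_E=g_0$, $\xi$ is a unit vector orthogonal to $E$, and $g(\cdot,\xi)=\theta$, which is \eqref{eq:006}. Because over $\C$ a nondegenerate rank-$4$ symmetric form is locally standard, there is a holomorphic orthogonal complex structure $J$ on $(E,g_0)$ with $J^2=-\mathrm{id}$ (for instance $J=j^+\otimes\mathrm{id}$ for a symplectic $j^+$ on $S^+$); I put $\phi\eqdef J$ on $E$ and $\phi(\xi)\eqdef 0$. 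Then $\phi(E)\subseteq E$ and $\phi(\xi)=0$ give \eqref{eq:001}--\eqref{eq:003}, the identity $J^2=-\mathrm{id}$ together with $\phi(\xi)=0$ gives $\phi^2=-\mathrm{id}+\theta\otimes\xi$, i.e. \eqref{eq:004}, equation \eqref{eq:005} is the normalisation, and the orthogonality of $J$ with $g(\cdot,\xi)=\theta$ gives \eqref{eq:007}; so $(\phi,\xi,\theta,g)$ is an almost contact structure.

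The substance of the proof, and the step I expect to be the main obstacle, is well-definedness. The invariant input fixes only $[\theta]$, $[g_0|_E]$, and (once $J$ is chosen) the tensor $\phi|_E=J$; the construction in addition selected a scale $\theta\mapsto f\theta$, a scale $g_0\mapsto Fg_0$, and a transversal $\xi$. Replacing these amounts to $\theta\mapsto f\theta$, $g_0\mapsto Fg_0$ and $\xi\mapsto X_0+f^{-1}\xi$ for some $X_0\in\Gamma(E)$, the last because $\theta'(\xi')=1$ forces the coefficient of $\xi$ to be $f^{-1}$. I would then verify by direct substitution that $g_0+\theta\otimes\theta$ and $\phi$ transform into precisely the $g'$ and $\phi'$ of Lemma~\ref{lem:23} for the parameters $(f,F,X_0)$; the cleanest check is that both sides are the unique data with $\ker\theta'=E$, $g'|_E=Fg_0$, $\xi'\perp_{g'}E$, $g'(\xi',\xi')=1$, and $\phi'|_E=J$. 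Hence every admissible choice lands in the single family $[\phi,\xi,\theta,g]$ of Definition~\ref{def:24}, which is therefore attached to $\mathcal{P}$. Finally, in agreement with the Main Theorem, I would record that $J$ itself is not determined by $\mathcal{N}_x$ and is left untouched by Lemma~\ref{lem:23}; existence of some compatible $J$ suffices for the statement, and the non-uniqueness of $\phi$ is immaterial since $\beta$-surfaces depend only on $g$ and $\theta$.
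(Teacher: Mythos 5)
Your construction is correct and follows the paper's own proof in essentially the same way: fix representatives $\theta$, $g_0$ from Lemma \ref{lem:42}, form $g = g_0 + \theta \otimes \theta$, take the unit transversal $\xi$ orthogonal to $E = \mathrm{Ker}\,\theta$, supply a compatible $(1,1)$-tensor on a sufficiently small neighborhood, and pass to the class of Definition \ref{def:24}. Your two elaborations --- realizing $\phi$ explicitly as an orthogonal complex structure $J$ on $(E,g_0)$ (which also secures \eqref{eq:007}, a condition the paper's proof leaves tacit when it asks only for \eqref{eq:002}--\eqref{eq:004}), and the check that changing the representatives and the transversal is absorbed exactly by the transformation of Lemma \ref{lem:23} --- are refinements of points the paper handles implicitly or in the remark following its proof, not a different route.
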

\begin{proof}
From Lemma \ref{lem:42}, we have the 5-dimensional manifold $\mathcal{M}^\C$, the 1-form $\theta$, and the bilinear form $g_0$.
Replacing the space $\mathcal{M}^\C$ with a sufficiently small neighborhood of a point if necessary, $\theta$ and $g_0$ do not vanish on $\mathcal{M^\C}$.
The symmetric form $g_0$ is non-degenerate on the subbundle $\mathrm{Ker}(\theta)$.
We define the symmetric bilinear form $g$ as
\begin{equation}
	g \eqdef \theta^2 +g_0. \label{eq:3338}
\end{equation}
It is non-degenerate.
There is a unique holomorphic vector field $\xi \in (\mathrm{Ker}\ \theta)^\perp$ such that
\begin{equation}
	\theta(\xi)=1.
\end{equation}
We take a holomorphic $(1,1)$-tensor field $\phi$ on $\mathcal{M}^\C$ such that the equations \eqref{eq:002} to \eqref{eq:004} hold.
Such a tensor field $\phi$ always exists if the manifold $\mathcal{M}^\C$ is sufficiently small.
Then, the quadruple $(\phi,\xi,\theta,g)$ is a complex almost contact structure.
The equivalent class of complex almost contact structures in the sense of Definition \ref{def:24} $[\phi,\xi,\theta,g]$ is defined by $(\phi,\xi,\theta,g)$.
\end{proof}

\begin{remark*}
Let a 1-form $\theta$ and a symmetric bilinear form $g_0$ be tensors given in Lemma \ref{lem:42}.
For arbitrary non-vanishing functions $f$ and $F$ on $\mathcal{M}^\C$, we get the 1-from $f \theta$ and the symmetric bilinear form $F g_0$.
Applying the above method in proof of Proposition \ref{thm:01} to the 1-form $f \theta$ and the symmetric bilinear form $F g_0$, we get a complex almost contact structure $(\phi^\prime, \xi^\prime, \theta^\prime, g^\prime)$ on $\mathcal{M}^\C$ (see Lemma \ref{lem:23}).
This complex almost contact structure is contained in the equivalent class $[\phi,\xi,\theta,g]$.
\hfill{$\Box$}
\end{remark*}

Next, we show the reverse construction.
Let $(\mathcal{M}^\C, \phi, \xi, \theta, g)$ be a 5-dimensional complex almost contact manifold.
If we assume that the tautological distribution on $\mathcal{M}^\C$ is integrable, then there is a foliation on $\widetilde{\mathcal{P}}$ whose leaves are horizontal lifts of $\beta$-surfaces.
By replacing $\mathcal{M}^\C$ with a sufficiently small neighborhood of a point if necessary, we assume that the leaf space $\mathcal{P}$ of this foliation is a 4-dimensional complex manifold.
Then, we get the following double fibration (Figure \ref{fig:02}), where the map $\tau \colon \widetilde{\mathcal{P}} \to \mathcal{M}^\C$ is the projection of the bundle and the map $\eta$ is the quotient mapping.

\begin{figure}[ht]
	\centering
		\includegraphics{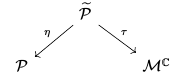}
	\caption{double fibration}\label{fig:02}
\end{figure}

We define a submanifold $C_x \subset \mathcal{P}$ as
\begin{equation}
	C_x \eqdef \eta \circ \tau^{-1} (x) \ (x \in \mathcal{M}^\C).
\end{equation}
The following proposition shows that the normal bundle of $C_x$ is isomorphic to the bundle $\mathcal{O} \oplus \mathcal{O} \oplus \mathcal{O}(1)$.

\begin{proposition}\label{thm:02}
In the above situation, the complex manifold $\mathcal{P}$ is a 4-dimensional twistor space.
Also, the set
\begin{equation}
	C_x \eqdef \eta \circ \tau^{-1}(x),\ \ \ (x \in \mathcal{M}^\C)
\end{equation}
is a twistor line.
\end{proposition}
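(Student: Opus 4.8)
The plan is to realize $C_x$ explicitly through the double fibration and to compute its normal bundle by restricting the two canonical exact sequences attached to $\tau$ and $\eta$ to the fibre $\tau^{-1}(x)$, in the spirit of LeBrun--Mason \cite{LM}. First I would check that $C_x$ is a smooth rational curve: the fibre $\tau^{-1}(x) \cong \mathbb{P}(S^-_x) \cong \mathbb{CP}^1$ is vertical for $\tau$, whereas the leaves of the foliation (the $\eta$-fibres) are horizontal lifts of $\beta$-surfaces, so $T\tau^{-1}(x) \cap \widetilde{\mathcal{E}} = 0$ and hence $d\eta$ is injective on $T\tau^{-1}(x)$. Thus $\eta|_{\tau^{-1}(x)}$ is an immersion, and after shrinking $\mathcal{M}^\C$ to a small neighbourhood it is an embedding, so $C_x \cong \mathbb{CP}^1$ and $\eta^* T\mathcal{P}|_{\tau^{-1}(x)} \cong T\mathcal{P}|_{C_x}$.

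Next I would write down, over $\widetilde{\mathcal{P}}$, the two short exact sequences
\[
0 \to T_\tau \to T\widetilde{\mathcal{P}} \xrightarrow{d\tau} \tau^* T\mathcal{M}^\C \to 0, \qquad 0 \to \widetilde{\mathcal{E}} \to T\widetilde{\mathcal{P}} \xrightarrow{d\eta} \eta^* T\mathcal{P} \to 0,
\]
where $T_\tau$ is the $\tau$-vertical bundle and $\widetilde{\mathcal{E}}$ is the tautological distribution (the $\eta$-vertical bundle). On $\tau^{-1}(x)$ one identifies $T_\tau = \mathcal{O}(2)$, $\tau^* T\mathcal{M}^\C = T_x\mathcal{M}^\C \otimes \mathcal{O} = \mathcal{O}^{\oplus 5}$, and $\widetilde{\mathcal{E}}$ with the bundle of $\beta$-planes: since the $\beta$-plane at $[\psi^-]$ is $S^+_x \otimes \psi^-$ and the line $\mathbb{C}\psi^-$ traces the tautological sub-bundle $\mathcal{O}(-1) \subset S^-_x \otimes \mathcal{O}$, one gets $\widetilde{\mathcal{E}}|_{\tau^{-1}(x)} \cong S^+_x \otimes \mathcal{O}(-1) =: \mathcal{B}$. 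Using the Levi-Civita horizontal lift to split $T\widetilde{\mathcal{P}}|_{\tau^{-1}(x)} = T_\tau \oplus H$ with $H \xrightarrow{\sim} \tau^* T\mathcal{M}^\C$ via $d\tau$ and $\widetilde{\mathcal{E}} \subset H$, the second sequence gives $\eta^* T\mathcal{P}|_{\tau^{-1}(x)} \cong \mathcal{O}(2) \oplus ( (T_x\mathcal{M}^\C \otimes \mathcal{O}) / \mathcal{B} )$.

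Then I would evaluate the quotient via the tautological sequence $0 \to \mathcal{O}(-1) \to S^-_x \otimes \mathcal{O} \to \mathcal{O}(1) \to 0$ on $\mathbb{P}(S^-_x)$: tensoring with the trivial rank-$2$ factor $S^+_x$ yields $(E_x \otimes \mathcal{O})/\mathcal{B} \cong S^+_x \otimes \mathcal{O}(1) \cong \mathcal{O}(1)^{\oplus 2}$, while the $\langle\xi\rangle$-direction contributes a trivial summand $\mathcal{O}$. Hence $\eta^* T\mathcal{P}|_{\tau^{-1}(x)} \cong \mathcal{O}(2) \oplus \mathcal{O}(1)^{\oplus 2} \oplus \mathcal{O}$. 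Finally $TC_x = d\eta(T_\tau)$ is exactly the $\mathcal{O}(2)$ summand, since $d\eta$ maps $T_\tau$ isomorphically onto it (because $T_\tau \cap \widetilde{\mathcal{E}} = 0$), so
\[
N_{C_x/\mathcal{P}} = T\mathcal{P}|_{C_x}/TC_x \cong \mathcal{O}(1)^{\oplus 2} \oplus \mathcal{O} \cong \mathcal{O} \oplus \mathcal{O}(1) \oplus \mathcal{O}(1),
\]
which is precisely the condition in Definition \ref{def:41}; therefore $C_x$ is a twistor line and $\mathcal{P}$ is a 4-dimensional twistor space.

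The hard part will be the careful bookkeeping that identifies $\widetilde{\mathcal{E}}|_{\tau^{-1}(x)}$ with the tautological sub-bundle $S^+_x \otimes \mathcal{O}(-1)$ as an actual sub-bundle of $T\widetilde{\mathcal{P}}$, together with verifying that the connection-induced splitting is compatible enough that $TC_x$ is genuinely the $\mathcal{O}(2)$ summand and does not mix with the other factors; this is exactly where the hypothesis that $\widetilde{\mathcal{E}}$ is a \emph{horizontal} lift for the Levi-Civita connection is indispensable. The embedding statement for $\eta|_{\tau^{-1}(x)}$, and the consequent identification $\eta^* T\mathcal{P}|_{\tau^{-1}(x)} \cong T\mathcal{P}|_{C_x}$, will require the standard restriction to a sufficiently small neighbourhood.
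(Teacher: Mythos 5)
Your proposal is correct, and its key step takes a genuinely different route from the paper's. For the first claim (that $C_x$ is a smooth rational curve) the two arguments essentially coincide: injectivity of $\eta \rvert_{\tau^{-1}(x)}$ comes from shrinking $\mathcal{M}^\C$ so that $\beta$-surfaces have no self-intersections (two points of the fibre mapping to the same leaf would force one $\beta$-surface to pass through $x$ with two distinct tangent planes); your additional observation that $T_\tau \cap \widetilde{\mathcal{E}} = 0$ makes $\eta\rvert_{\tau^{-1}(x)}$ an immersion is in fact left implicit in the paper, and it is needed to conclude that the image is a smooth curve rather than merely the injective image of $\mathbb{CP}^1$. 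The real divergence is in the normal bundle computation. The paper invokes the local twistor description of Ward--Wells \cite{WW}: a section of $N_{C_x/\mathcal{P}}$ is represented by a spinor triple $(\Xi,\Omega^A,\Pi_{A'})$, where the $\langle \xi_x \rangle$-component $\Xi$ is homogeneous of degree $0$ and $\Omega^A$ is homogeneous of degree $1$ in the homogeneous coordinates of $C_x$, so the splitting type $\mathcal{O} \oplus \mathcal{O}(1) \oplus \mathcal{O}(1)$ is read off from homogeneity counting, exactly as in the $3$-dimensional case. You instead restrict the two exact sequences of the double fibration to $\tau^{-1}(x)$, identify $\widetilde{\mathcal{E}} \rvert_{\tau^{-1}(x)} \cong S^+_x \otimes \mathcal{O}(-1)$ through the tautological sub-bundle of $S^-_x \otimes \mathcal{O}$, and compute the quotient by the Euler sequence; this is closer to the algebro-geometric side of \cite{LM}. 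Your route is more self-contained and yields the finer statement $T\mathcal{P}\rvert_{C_x} \cong \mathcal{O}(2) \oplus \mathcal{O}(1)^{\oplus 2} \oplus \mathcal{O}$ with $TC_x$ identified as the $\mathcal{O}(2)$ summand, at the cost of the bookkeeping you flag: one must check that the Levi-Civita horizontal lift trivializes the horizontal bundle holomorphically along $\tau^{-1}(x)$, so that $\widetilde{\mathcal{E}}$ really sits inside $T_x\mathcal{M}^\C \otimes \mathcal{O}$ as the sub-bundle $S^+_x \otimes \mathcal{O}(-1)$; this is standard for the holomorphic Levi-Civita connection of a holomorphic metric. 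The paper's argument is shorter but leans on the reader's familiarity with local twistor transport, which your proof avoids entirely.
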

\begin{proof}

Let $x \in \mathcal{M}^\C$ be a point.
The set $\widetilde{C_x} \eqdef \tau^{-1}(x) \cong \mathbb{CP}^1$ consists of $\beta$-planes in the tangent space $T_x \mathcal{M}^\C$. 
If $\beta$-planes $\tilde{z}, \tilde{z}^\prime \in \widetilde{C_x}$ satisfy
\[
	\eta(\tilde{z}) = \eta ( \tilde{z}^\prime ),
\]
then there is a $\beta$-surface which is tangent to $\tilde{z}$ and $\tilde{z}^\prime$.
Since the manifold $\mathcal{M}^\C$ is replaced with a sufficiently small neighborhood of a point, we can assume that a $\beta$-surface has no self-intersection.
Hence, $\tilde{z} = \tilde{z}^\prime$ holds.
Thus, the holomorphic map $\eta \rvert_{\widetilde{C_x}}$ is a bijective to the image.
Hence, the set $C_x \eqdef \eta \circ \tau^{-1} (x)$ is a rational curve.

Next, we show that the normal bundle $N_{C_x/\mathcal{P}}$ is isomorphic to the vector bundle $\mathcal{O} \oplus \mathcal{O}(1) \oplus \mathcal{O}(1)$.
To show that, we use a local twistor description \cite{WW}.
In 3-dimensional twistor theory, a tangent vector $Q$ at a point $z$ in a 3-dimensional twistor space can be thought of as a connecting vector joining the $\beta$-surface corresponding to $z$ and the neighboring $\beta$-surface.
By using this, a vector field on a twistor line can be expressed as a pair of spinors $(\Omega^A,\Pi_{A^\prime})$.
The pair of spinors is called a local twistor representing. 
The normal bundle of a twistor line in a 3-dimensional twistor space is isomorphic to $\mathcal{O}(1) \oplus \mathcal{O}(1)$ since a vector field tangent to the twistor line corresponds to a pair of spinors $(0,\Pi_{A^\prime})$ and $\Omega^{A}\ (A=0,1)$ is homogeneous of degree $1$ in homogeneous coordinates of the twistor line.

In 4-dimensional case, the tangent space $T_x \mathcal{M}^\C$ is decomposed into 
\begin{equation}
	T_x \mathcal{M}^\C = \langle \xi_x \rangle \oplus E_x.
\end{equation}
A vector field of a rational curve $C_x$ in a 4-dimensional twistor space can be expressed as a triple $(\Xi,\Omega^A,\Pi_{A^\prime})$, where $\Xi$ is the $\langle \xi_x \rangle$-component. 
This is a local twistor representing in the 4-dimensional case.
Since the component $\Xi$ is homogeneous of degree $0$ in homogeneous coordinates of $C_x$, the normal bundle $N_{C_x/\mathcal{P}}$ satisfies
\begin{equation}
	N_{C_x/\mathcal{P}} \cong \mathcal{O} \oplus \mathcal{O}(1) \oplus \mathcal{O}(1).
\end{equation}
\end{proof}

We call the above manifold $\mathcal{P}$ the 4-dimensional twistor space associated with the 5-dimensional complex almost contact manifold $(\mathcal{M}^\C,\phi,\xi,\theta,g)$.  

Let $\mathcal{P}$ be the 4-dimensional twistor space associated with a 5-dimensional complex almost contact manifold $(\mathcal{M}^\C,\phi,\xi,\theta,g)$.
We assume that there is a complex analytic family $\{ C_x \}_{x \in \mathcal{M}^\C}$, where $C_x = \eta \circ \tau^{-1}(x) \subset \mathcal{P}$ is a twistor line.
The twistor line $C_x$ is a set of $\beta$-surfaces through $x$.
Let $v$ be a tangent vector in $T_x \mathcal{M}^\C$.
The vector $v$ defines a holomorphic deformation of the twistor line $C_x$ in $\mathcal{P}$.
We denote this deformation by $C_{x +v}$.
A zero of the section $s_v \in H^0 (C_x, N_{C_x/ \mathcal{P}})$ corresponding to $v$ is a point in $C_x \cap C_{x+v}$.
Let $z$ be a point in $C_x \cap C_{x+v}$.
The point $z$ can be thought of as a $\beta$-surface through $x$.
Since the point $z$ is in $C_{x+v}$, the vector $v$ tangents to this $\beta$-surface.
Hence, the vector $v$ is in the null cone $\mathcal{N}_x$ defined by the equation \eqref{eq:60606} if and only if $v$ is null with respect to the complex almost contact structure $(\phi,\xi,\theta,g)$.
Therefore, Proposition \ref{thm:01} and Proposition \ref{thm:02} are the reverse construction of each other.

From the above, we show the following:
\begin{theorem}\label{thm:11}
We can construct a 5-dimensional complex almost contact manifold from a 4-dimensional twistor space.

Conversely, if a 5-dimensional complex almost contact manifold satisfies
\begin{equation}
	W^-_- =0,\ \ \ R^0_- =0,
\end{equation}
then we can construct a 4-dimensional twistor space.
\hfill{$\Box$}
\end{theorem}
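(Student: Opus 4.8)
The plan is to assemble Theorem \ref{thm:11} from the machinery already developed, treating it as a synthesis of the two directions rather than a fresh argument. For the forward direction, I would invoke Proposition \ref{thm:01} essentially verbatim: given a 4-dimensional twistor space $\mathcal{P}$ with twistor line $C$, the deformation theory of \cite{K} produces the 5-dimensional family $\mathcal{M}^\C$ with $T_C \mathcal{M}^\C \cong H^0(C, N_{C/\mathcal{P}}) \cong \C^5$, and Lemma \ref{lem:42} extracts from the null cone $\mathcal{N}_x$ the canonical 1-form $\theta$ and degenerate bilinear form $g_0$. Proposition \ref{thm:01} then upgrades these to a full complex almost contact structure $(\phi,\xi,\theta,g)$, so the forward statement is immediate.

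For the reverse direction, the key is to chain together the integrability results of Section 3 with the construction of Proposition \ref{thm:02}. Assuming a 5-dimensional complex almost contact manifold satisfies $W^-_- = 0$ and $R^0_- = 0$, I would first apply Proposition \ref{prop:310} to conclude that the tautological distribution $\widetilde{\mathcal{E}}$ on the $\beta$-plane bundle $\widetilde{\mathcal{P}}$ is integrable. Integrability yields a foliation of $\widetilde{\mathcal{P}}$ by horizontal lifts of $\beta$-surfaces; passing to the leaf space (after shrinking $\mathcal{M}^\C$ to a suitable neighborhood so that the quotient is a manifold) produces the 4-dimensional complex manifold $\mathcal{P}$ together with the double fibration $\mathcal{M}^\C \xleftarrow{\tau} \widetilde{\mathcal{P}} \xrightarrow{\eta} \mathcal{P}$. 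Proposition \ref{thm:02} then certifies that $\mathcal{P}$ is genuinely a 4-dimensional twistor space, since each $C_x = \eta \circ \tau^{-1}(x)$ is a rational curve whose normal bundle is computed via the local twistor description to be $\mathcal{O} \oplus \mathcal{O}(1) \oplus \mathcal{O}(1)$.

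The only genuine content beyond citing these propositions is the compatibility check: I would note that the reverse construction is inverse to the forward one, which is precisely the content of the paragraph preceding the theorem. There the null cone $\mathcal{N}_x$ defined by zeros of sections $s_v \in H^0(C_x, N_{C_x/\mathcal{P}})$ is identified with the set of vectors null with respect to $(\phi,\xi,\theta,g)$, so that the almost contact structure recovered from $\mathcal{P}$ lies in the original equivalence class $[\phi,\xi,\theta,g]$. This establishes that the two constructions are mutually reverse, matching the Main Theorem's claim that the almost contact manifold is one of those associated with $\mathcal{P}$.

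The main obstacle, conceptually, is already absorbed into the earlier results: it is the passage from the pointwise curvature conditions to global integrability of $\widetilde{\mathcal{E}}$, carried out in Lemmas \ref{lem:38} and \ref{lem:39} via Lemma \ref{lem:37}. The subtle point there — that $R(v,w)v \in \mathcal{B}$ for $v,w$ spanning a $\beta$-plane $\mathcal{B}$ — relies on reading $v \wedge w$ as a null element of $\wedge_-$ and invoking Lemma \ref{lem:32} to kill the Ricci contribution, so that $W^-_- = 0$ and $R^0_- = 0$ together force the curvature to preserve $\mathcal{B}$. Since Theorem \ref{thm:11} is stated as a corollary-style summary, I expect its proof to be a short assembly, and I would simply write that it follows from Proposition \ref{thm:01}, Proposition \ref{thm:02}, and Proposition \ref{prop:310} together with the mutual-inverse discussion above.
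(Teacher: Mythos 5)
Your proposal matches the paper's own treatment exactly: Theorem \ref{thm:11} is indeed stated as a summary whose proof is the assembly of Proposition \ref{thm:01} (forward direction), Proposition \ref{prop:310} together with the leaf-space construction and Proposition \ref{thm:02} (reverse direction), and the mutual-inverse discussion in the paragraph preceding the theorem. Nothing is missing; this is essentially the same argument.
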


\section{Examples}

In this section, we describe two types of examples of Theorem \ref{thm:11}.
One is a trivial example, which is a trivial line bundle over a 3-dimensional twistor space.
The other is the Ren-Wang twistor space, which is investigated in \cite{RW}.

\subsection{Trivial examples and flat model}

Let $Z$ be a 3-dimensional twistor space and $C$ a twistor line in $Z$.
There is a complex analytic family $\{ C_x \}_{x \in M^\C}$, where $M^\C$ is a 4-dimensional manifold and each $C_x$ is a twistor line.
We consider the trivial bundle
\begin{equation}
	\mathcal{P} = Z \times \C
\end{equation}
over $Z$ and the 5-dimensional manifold
\begin{equation}
	\mathcal{M}^\C = M^\C \times \C.
\end{equation}
For each point $(x,t) \in \mathcal{M}^\C$, we set
\begin{equation}
	C_{(x,t)} = C_x \times \{ t \} \subset \mathcal{P}.
\end{equation}
The normal bundle of $C_{(x,t)}$ is isomorphic to $\mathcal{O} \oplus \mathcal{O}(1) \oplus \mathcal{O}(1)$.
Hence, the manifold $\mathcal{P}$ is a 4-dimensional twistor space, and there is a complex analytic family $\{ C_{(x,t)} \}_{(x,t) \in \mathcal{M}^\C}$.
By Lemma \ref{lem:42}, we have the class of degenerate symmetric form $[g_0]$ on $\mathcal{M}^\C$.
On the other hand, by the theory of 3-dimensional twistor spaces, there is the self-dual conformal class $[g_1]$ on $M^\C$.
If we pull back $[g_1]$ to $\mathcal{M}^\C$ by the projection $\mathcal{M}^\C \to M^\C$, then $[g_0] = [g_1]$. 
Also, if $t$ is a coordinate on $\C$, then the class of 1-forms provided by Lemma \ref{lem:42} is $[dt]$, which consists of $dt$ multiplied by non-vanishing functions. 
From the above, the metric $g=g_0+dt^2$, which is a complex metric on $\mathcal{M}^\C$, is the product metric.

\subsection{Ren-Wang twistor space}

Next, we deal with the 4-dimensional complex manifold which is investigated in \cite{RW}.
We call this manifold the Ren-Wang twistor space.
Ren-Wang twistor space is defined as the set of ``$\alpha$-planes'' in the 5-dimensional complex Heisenberg group.
The definition of an $\alpha$-plane in \cite{RW} is given by the structure of the 5-dimensional complex Heisenberg group as a Lie group, not an almost contact manifold.
Thus, we have to show that the Ren-Wang twistor space is an example of Theorem \ref{thm:11}. 

First, we describe the Ren-Wang twistor space.
The space is obtained from two copies $W$ and $\widetilde{W}$ of $\C^4$ by identifying their open subsets by the biholomorphic map
\begin{align}
	\Phi &\colon W \setminus \{ \zeta=0 \} \to \widetilde{W} \setminus \{ \widetilde{\zeta} =0 \}, \notag \\
	&\ \ (\zeta,\omega_0,\omega_1,\omega_2) \mapsto (\zeta^{-1},\zeta^{-1}\omega_0,\zeta^{-1}\omega_1,\omega_2+2\zeta^{-1}\omega_0 \omega_1), \label{eq:3336}
\end{align}
where $(\zeta,\omega_0,\omega_1,\omega_2)$ are coordinates on $W$ and $(\widetilde{\zeta},\widetilde{\omega}_0,\widetilde{\omega}_1,\widetilde{\omega}_2)$ are coordinates on $\widetilde{W}$.
We write the resulting 4-dimensional complex manifold by $\mathcal{P}$ and call it the Ren-Wang twistor space.

Let $(y_{AA^\prime},t)\ (A,A^\prime=0,1)$ be coordinates on $\C^5$ and maps $\eta$ and $\tau$ locally given by
\begin{align}
	\eta &\colon (y_{AA^\prime}, t , \zeta) \mapsto (\zeta, y_{00^\prime}+\zeta y_{01^\prime}, y_{10^\prime} +\zeta y_{11^\prime}, t- \langle \mathbf{y}_{0^\prime} +\zeta \mathbf{y}_{1^\prime}, \mathbf{y}_{1^\prime} \rangle), \notag \\
	\tau &\colon (y_{AA^\prime}, t , \zeta) \mapsto (y_{AA^\prime},t), \label{eq:036}
\end{align}
where $\mathbf{y}_{A^\prime} =(y_{0A^\prime}, y_{1A^\prime}) \ (A^\prime=0,1)$ and 
\begin{equation}
	\langle \mathbf{w}, \widetilde{\mathbf{w}} \rangle = w_1 \widetilde{w}_2 +\widetilde{w}_1 w_2
\end{equation}
for two vectors $\mathbf{w}=(w_1,w_2), \widetilde{\mathbf{w}} =(\widetilde{w}_1, \widetilde{w}_2) \in \C^2$.
There is a double fibration (Fig. \ref{fig:01})\cite{RW}.
\begin{figure}[ht]
	\centering
		\includegraphics{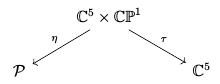}
	\caption{double fibration}\label{fig:01}
\end{figure}

Let $\pi$ be a natural projection $\pi \colon \mathcal{P} \to \mathbb{CP}^1$ given by
\begin{align}
	&\pi \rvert_W \colon (\zeta, \omega_0, \omega_1, \omega_2) \mapsto \zeta, \notag \\
	&\pi \rvert_{\widetilde{W}} \colon (\widetilde{\zeta},\widetilde{\omega}_0,\widetilde{\omega}_1,\widetilde{\omega}_2) \mapsto \widetilde{\zeta}.
\end{align}
The map $\pi \colon \mathcal{P} \to \mathbb{CP}^1$ gives a $\C^3$-bundle over a complex projective line.
The rational curve $C_0 \eqdef \eta \circ \tau^{-1} (\mathbf{0},0)$ is the zero section of the bundle.

\begin{lemma}\label{lem:05}
The normal bundle of the rational curve $C_0 \subset \mathcal{P}$ satisfies
\begin{equation}
	N_{C_0 /\mathcal{P}} \cong \mathcal{O} \oplus \mathcal{O}(1) \oplus \mathcal{O}(1).
\end{equation}
Therefore, the Ren-Wang twistor space is a 4-dimensional twistor space, and $C_0$ is a twistor line.
\end{lemma}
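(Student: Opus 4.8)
The plan is to recognize $C_0$ as the zero section of the fiber bundle $\pi\colon \mathcal{P} \to \mathbb{CP}^1$ and to compute its normal bundle from the linearization of the gluing map $\Phi$ along this section. First I would verify directly from the formula for $\eta$ in \eqref{eq:036} that $\tau^{-1}(\mathbf{0},0) = \{(\mathbf{0},0,\zeta)\}$ maps under $\eta$ to the locus $\{(\zeta,0,0,0)\}$ in the chart $W$ (and to $\{\widetilde{\omega}_i = 0\}$ in $\widetilde{W}$), so that $C_0$ is exactly the zero section of the $\C^3$-bundle $\pi$ and is in particular a rational curve isomorphic to $\mathbb{CP}^1$.

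Since $C_0$ is the zero section, its normal bundle $N_{C_0/\mathcal{P}}$ is canonically the vertical (fiber-direction) part of $T\mathcal{P}$ restricted to $C_0$, and it is determined by the Jacobian of the fiber coordinates of the transition map $\Phi$ evaluated along the zero section. From \eqref{eq:3336}, writing the fiber coordinates as $(\omega_0,\omega_1,\omega_2)$ and $(\widetilde\omega_0,\widetilde\omega_1,\widetilde\omega_2)$ over the overlap $\zeta \neq 0$, I would compute the differential of the fiber map $(\omega_0,\omega_1,\omega_2)\mapsto(\zeta^{-1}\omega_0,\ \zeta^{-1}\omega_1,\ \omega_2+2\zeta^{-1}\omega_0\omega_1)$ at $\omega=0$. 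The key observation is that the only nonlinear term, $2\zeta^{-1}\omega_0\omega_1$, is quadratic in the fiber coordinates and hence contributes nothing to the linearization at the zero section; the Jacobian is therefore the diagonal matrix $\mathrm{diag}(\zeta^{-1},\zeta^{-1},1)$.

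It remains to identify the resulting line-bundle transition functions. On $\mathbb{CP}^1$ with affine coordinates related by $\widetilde\zeta = \zeta^{-1}$, a fiber coordinate transforming by $\widetilde v = \zeta^{-1} v$ defines the line bundle $\mathcal{O}(1)$, while one transforming trivially, $\widetilde v = v$, defines $\mathcal{O}$; this can be checked by the section count, since under $\widetilde v = \zeta^{-1}v$ the global sections are exactly the degree-$\leq 1$ polynomials in $\zeta$, giving a two-dimensional space, while $\widetilde v = v$ yields only the constants. Because the linearized transition matrix is diagonal, the normal bundle splits accordingly, so that
\begin{equation}
	N_{C_0/\mathcal{P}} \cong \mathcal{O}(1) \oplus \mathcal{O}(1) \oplus \mathcal{O},
\end{equation}
which is the asserted isomorphism; the final conclusion of the lemma then follows immediately from Definition \ref{def:41}.

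I expect the only genuine subtlety to be the principle invoked at the start of the second paragraph: because of the quadratic term $2\zeta^{-1}\omega_0\omega_1$, the total space $\mathcal{P}$ is \emph{not} a holomorphic vector bundle over $\mathbb{CP}^1$, so one must argue explicitly that the normal bundle of the zero section depends only on the first-order part of $\Phi$ in the fiber directions. Once this linearization principle is in place, the computation is immediate, and the matching of the transition $\zeta^{-1}$ with $\mathcal{O}(1)$ is merely a convention check.
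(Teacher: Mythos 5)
Your proof is correct and takes essentially the same route as the paper: both arguments reduce to linearizing the gluing map $\Phi$ along $C_0$ and reading off the diagonal transition functions $\zeta^{-1},\zeta^{-1},1$ for the normal directions. The only cosmetic difference is that the paper computes the full $4\times 4$ Jacobian $J_\Phi$, restricts it to $C_0$, and quotients $\iota^*T\mathcal{P}$ by $TC_0$, whereas you use the zero-section identification $N_{C_0/\mathcal{P}}\cong V\rvert_{C_0}$ and only the fiber block; the ``linearization principle'' you flag as a subtlety is handled automatically in the paper's formulation, since the quadratic term's contributions to the Jacobian (the entries $2\zeta^{-1}\omega_1$ and $2\zeta^{-1}\omega_0$) simply vanish upon restriction to $\omega_0=\omega_1=\omega_2=0$.
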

\begin{proof}
To find the normal bundle, we calculate the Jacobian $J_{\Phi}$ of the map $\Phi$ defined in \eqref{eq:3336}.
From the direct calcuration, it is
\begin{equation}
	J_{\Phi} = \begin{pmatrix} -\zeta^{-2} & 0 & 0 & 0 \\
	-\zeta^{-2}\omega_0 & \zeta^{-1} & 0 & 0 \\
	-\zeta^{-2}\omega_1 & 0 & \zeta^{-1} & 0 \\
	-2\zeta^{-2} \omega_0 \omega_1 &2\zeta^{-1} \omega_1& 2\zeta^{-1} \omega_0& 1 \end{pmatrix}.
\end{equation}
Hence, the Jacobian restricted to $C_0 = \{ \omega_0=\omega_1=\omega_2=0 \}$ is
\begin{equation}
	J_{\Phi} \rvert_{C_0} =\begin{pmatrix} -\zeta^{-2} & 0 & 0 & 0 \\
	0 & \zeta^{-1} & 0 & 0 \\
	0 & 0 & \zeta^{-1} & 0 \\
	0 & 0& 0& 1 \end{pmatrix}.
\end{equation}
This is the transition function of $\iota^* T\mathcal{P}$, where the map $\iota \colon C_0 \to \mathcal{P}$ is the natural inclusion.
Therefore, the normal bundle $N_{C_0/\mathcal{P}}$ is
\begin{equation}
	N_{C_0 /\mathcal{P}} = \iota^* T \mathcal{P} / TC_0 \cong \mathcal{O} \oplus \mathcal{O}(1) \oplus \mathcal{O}(1).
\end{equation}
\end{proof}

By taking suitable coordinates, we can show that $C_{(y_{AA^\prime},t)} \eqdef \eta \circ \tau^{-1} (y_{AA^\prime},t)$ is a twistor line for any $(y_{AA^\prime},t) \in \C^5$.
So any rational curve of this form is a twistor line, then the space $\mathcal{M}^\C$ of twistor lines in the 4-dimensional twistor space $\mathcal{P}$ is exactly $\C^5$.

Next, we construct an equivalent class of almost contact structures on the manifold $\mathcal{M}^\C = \C^5$.
For a tangent vector
\begin{equation}
	v =\sum_{A,A^\prime} a_{AA^\prime} \frac{\partial}{\partial y_{AA^\prime}} +b\frac{\partial}{\partial t} \in T_{(y_{AA^\prime},t)} \mathcal{M}^\C,
\end{equation}
we set
\begin{equation}
	C_{(y_{AA^\prime},t)+v} \eqdef \eta \circ \tau^{-1} ((y_{AA^\prime},t)+(a_{AA^\prime},b)).
\end{equation}
The vector $v$ is in the null cone $\mathcal{N}$ defined in Lemma \ref{lem:42} if and only if the equation
\begin{equation}
	C_{(y_{AA^\prime},t)} \cap C_{(y_{AA^\prime},t)+v} \neq \emptyset \label{eq:100025}
\end{equation}
holds.
In this situation, we find a necessary and sufficient condition for \eqref{eq:100025}.

\begin{lemma}\label{lem:06}
Under the above situation, the equation
\begin{equation}
	C_{(y_{AA^\prime},t)} \cap C_{(y_{AA^\prime},t)+v} \neq \emptyset
\end{equation}
holds if and only if
\begin{align}
	&\mathrm{det}(a_{AA^\prime})=0,\label{eq:023} \\
	&b -y_{00^\prime} a_{11^\prime} -y_{10^\prime}a_{01^\prime}+y_{01^\prime}a_{10^\prime} +y_{11^\prime}a_{00^\prime}=0 \label{eq:024}
\end{align}
holds.
\end{lemma}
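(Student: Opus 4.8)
The plan is to compute the two rational curves explicitly in the coordinates of $\mathcal{P}$ via the map $\eta$ and read off the intersection condition. Since the first coordinate of $\eta$ is $\zeta$ and the projection $\pi$ returns precisely this coordinate, each curve $C_{(y,t)}$ is a section of the $\C^3$-bundle $\pi\colon\mathcal{P}\to\mathbb{CP}^1$; hence $C_{(y,t)}$ and $C_{(y,t)+v}$ meet if and only if they agree at some $\zeta\in\mathbb{CP}^1$, and at such a point their $\zeta$-coordinates coincide. Writing a point of $C_{(y,t)}$ in the chart $W$ as $(\zeta,\omega_0,\omega_1,\omega_2)$ with $\omega_0=y_{00^\prime}+\zeta y_{01^\prime}$, $\omega_1=y_{10^\prime}+\zeta y_{11^\prime}$, and $\omega_2=t-\langle\mathbf{y}_{0^\prime}+\zeta\mathbf{y}_{1^\prime},\mathbf{y}_{1^\prime}\rangle$, and likewise for $C_{(y,t)+v}$ with $y$ replaced by $y+a$ and $t$ by $t+b$, the intersection at parameter $\zeta$ is governed by the three scalar equations $\omega_0=\omega_0^\prime$, $\omega_1=\omega_1^\prime$, $\omega_2=\omega_2^\prime$.

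First I would record the two ``horizontal'' equations. The conditions $\omega_0=\omega_0^\prime$ and $\omega_1=\omega_1^\prime$ reduce to
\[
	a_{00^\prime}+\zeta a_{01^\prime}=0,\qquad a_{10^\prime}+\zeta a_{11^\prime}=0,
\]
which are affine-linear in $\zeta$. Homogenizing as $[\zeta_0:\zeta_1]$, this is a homogeneous $2\times 2$ linear system whose coefficient determinant equals, up to sign, $\det(a_{AA^\prime})$; hence a common root $\zeta_\ast\in\mathbb{CP}^1$ exists precisely when $\det(a_{AA^\prime})=0$, giving the first condition \eqref{eq:023}. (When all $a_{AA^\prime}$ vanish every $\zeta$ is a root, and when the matrix is nonzero $\zeta_\ast$ is unique.)

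Next I would impose the remaining equation $\omega_2=\omega_2^\prime$. Expanding $\langle\cdot,\cdot\rangle$, this is again affine-linear in $\zeta$, of the shape $-b+(\text{terms in }y,a)+2\zeta\,(\text{terms in }y,a)=0$. The key step is to evaluate it at the common root $\zeta_\ast$ and eliminate $\zeta_\ast$ using the horizontal relations $a_{00^\prime}=-\zeta_\ast a_{01^\prime}$ and $a_{10^\prime}=-\zeta_\ast a_{11^\prime}$: after substitution the quadratic-in-$a$ terms cancel, the explicit factor $2\zeta_\ast$ recombines with the substituted terms, and the surviving $\zeta_\ast$-linear part is absorbed back through $-\zeta_\ast a_{01^\prime}=a_{00^\prime}$ and $-\zeta_\ast a_{11^\prime}=a_{10^\prime}$. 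What remains is exactly the $\zeta$-free form \eqref{eq:024}. Thus, given \eqref{eq:023}, the equation $\omega_2=\omega_2^\prime$ at $\zeta_\ast$ holds if and only if \eqref{eq:024} holds, and both directions of the lemma follow.

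The main obstacle is the bookkeeping in this last substitution --- verifying that $\zeta_\ast$ drops out cleanly so that the intersection condition becomes the $\zeta$-independent linear form \eqref{eq:024} --- together with the treatment of an intersection point lying at $\zeta=\infty$. The latter occurs exactly when $a_{01^\prime}=a_{11^\prime}=0$; there I would repeat the computation in the chart $\widetilde W$, where $\eta$ is again affine-linear in $\widetilde\zeta=\zeta^{-1}$, and obtain the same pair of conditions by symmetry. As a consistency check, note that \eqref{eq:024} is linear and \eqref{eq:023} is quadratic in $v$, matching the decomposition of $\mathcal{N}_x$ into the hyperplane $\theta=0$ and the quadratic cone $g_0=0$ predicted by Lemma \ref{lem:42}.
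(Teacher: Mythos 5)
Your proof is correct and takes essentially the same route as the paper: parametrize both curves in the chart $W$, reduce the intersection to the pair of linear equations $a_{00^\prime}+\zeta a_{01^\prime}=a_{10^\prime}+\zeta a_{11^\prime}=0$ (common root iff $\det(a_{AA^\prime})=0$) together with the third equation, and eliminate $\zeta$ from the latter via $\mathbf{a}_{0^\prime}+\zeta\mathbf{a}_{1^\prime}=0$ to obtain \eqref{eq:024}. Your handling of a root at $\zeta=\infty$ is in fact slightly more careful than the paper's proof, which works only with $\zeta\in\C$ and thus tacitly skips the case $a_{01^\prime}=a_{11^\prime}=0$ with $(a_{00^\prime},a_{10^\prime})\neq(0,0)$, where the curves meet only over $\zeta=\infty$; your check in the chart $\widetilde{W}$ confirms that this case produces exactly the same conditions \eqref{eq:023} and \eqref{eq:024}, so the lemma as stated is unaffected.
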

\begin{proof}
The intersection of the rational curve $C_{(y_{AA^\prime},t)}$ and the coordinate neighborhood $W$ is given by
\begin{equation}
	C_{(y_{AA^\prime},t)} \cap W = \left\{ (\zeta, y_{00^\prime}+\zeta y_{01^\prime}, y_{10^\prime} +\zeta y_{11^\prime}, t- \langle \mathbf{y}_{0^\prime} +\zeta \mathbf{y}_{1^\prime}, \mathbf{y}_{1^\prime} \rangle) \setmid \zeta \in \C \right\}.
\end{equation}
Hence, $C_{(y_{AA^\prime},t)} \cap C_{(y_{AA^\prime},t)+v} \neq \emptyset$ holds if and only if there exists $\zeta \in \C$ which satisfies
\begin{align}
	y_{00^\prime}+\zeta y_{01^\prime} &=y_{00^\prime}+\zeta y_{01^\prime} +a_{00^\prime} +\zeta a_{01^\prime},  \\
	y_{10^\prime} +\zeta y_{11^\prime} &= y_{10^\prime} +\zeta y_{11^\prime} +a_{10^\prime} +\zeta a_{11^\prime},  \\
	 t- \langle \mathbf{y}_{0^\prime} +\zeta \mathbf{y}_{1^\prime}, \mathbf{y}_{1^\prime} \rangle &= t+b- \langle \mathbf{y}_{0^\prime} +\zeta \mathbf{y}_{1^\prime}+\mathbf{a}_{0^\prime} +\zeta \mathbf{a}_{1^\prime}, \mathbf{y}_{1^\prime} + \mathbf{a}_{1^\prime} \rangle.
\end{align}
Removing the common terms from both sides, we get
\begin{align}
	&a_{00^\prime} +\zeta a_{01^\prime} =0, \label{eq:020} \\
	&a_{10^\prime} +\zeta a_{11^\prime} =0, \label{eq:021} \\
	&b-\langle \mathbf{y}_{0^\prime} +\zeta \mathbf{y}_{1^\prime}+\mathbf{a}_{0^\prime} +\zeta \mathbf{a}_{1^\prime}, \mathbf{a}_{1^\prime} \rangle - \langle \mathbf{a}_{0^\prime} +\zeta \mathbf{a}_{1^\prime}, \mathbf{y}_{1^\prime} \rangle =0. \label{eq:022} 
\end{align}
There exists $\zeta \in \C$ which satisfies equations \eqref{eq:020} and \eqref{eq:021} if and only if
\begin{equation}
	\mathrm{det} (a_{AA^\prime}) =0
\end{equation}
holds, and if this is the case, then
\begin{equation}
	\mathbf{a}_{0^\prime}+\zeta \mathbf{a}_{1^\prime} =0
\end{equation}
holds.
Therefore, the equation \eqref{eq:022} turns out to be
\begin{align}
	0 &= b-\langle \mathbf{y}_{0^\prime} +\zeta \mathbf{y}_{1^\prime}, \mathbf{a}_{1^\prime} \rangle \notag \\
	&=b- \langle \mathbf{y}_{0^\prime} , \mathbf{a}_{1^\prime} \rangle - \langle \zeta \mathbf{y}_{1^\prime}, \mathbf{a}_{1^\prime} \rangle \notag \\ 
	&=b- \langle \mathbf{y}_{0^\prime} , \mathbf{a}_{1^\prime} \rangle +\langle \mathbf{y}_{1^\prime}, \mathbf{a}_{0^\prime} \rangle \notag \\
	&=b- y_{00^\prime} a_{11^\prime} -y_{10^\prime}a_{01^\prime}+y_{01^\prime}a_{10^\prime} +y_{11^\prime}a_{00^\prime}. 
\end{align}
\end{proof}

From the equation \eqref{eq:024} of Lemma \ref{lem:06}, the class of 1-forms on $\mathcal{M}^\C$ determined by Lemma \ref{lem:42} is
\begin{equation}
	[dt+y_{11^\prime} dy_{00^\prime} -y_{10^\prime}dy_{01^\prime} +y_{01^\prime}dy_{10^\prime}-y_{00^\prime}dy_{11^\prime}],
\end{equation}
and, from the equation \eqref{eq:023}, the class of degenerate symmetric forms determined by Lemma \ref{lem:42} is
\begin{equation}
	[dy_{00^\prime}dy_{11^\prime}-dy_{01^\prime}dy_{10^\prime}].
\end{equation}
We now show that there exist special elements in these classes which give a K-contact structure on $\mathcal{M}^\C$. 

Choose representatives in these classes as
\begin{align}
	\theta &= dt+y_{11^\prime} dy_{00^\prime} -y_{10^\prime}dy_{01^\prime} +y_{01^\prime}dy_{10^\prime}-y_{00^\prime}dy_{11^\prime}, \label{eq:6008} \\
	g_0 &= -2\sqrt{-1} (dy_{00^\prime}dy_{11^\prime}-dy_{01^\prime}dy_{10^\prime}).
\end{align}
In this case, the 1-form $\theta$ is non-degenerate, and the Reeb vector field $\xi$ of $\theta$ is given by
\begin{equation}
	\xi = \frac{\partial}{\partial t}.
\end{equation}
Also,  as we did in \eqref{eq:3338}, we define a non-degenerate metric $g$ on $\mathcal{M}^\C$ by
\begin{equation}
	g \eqdef \theta^2+g_0.
\end{equation}

Next, we find a $(1,1)$-tensor field $\phi$ such that the quadruple $(\phi,\xi,\theta,g)$ is a contact metric structure.
First, we introduce a local frame of the tangent bundle $T \mathcal{M}^\C$.
For simplicity, we introduce new indices $i \ (1 \leq i \leq 4)$ as
\begin{equation}
	i \eqdef 2A+A^\prime+1,
\end{equation}
and set
\begin{align}
	e_1 &= \partial_1 -y^4 \partial_t,\ \ e_2 = \partial_2 + y^3 \partial_t, \notag \\
	e_3 &= \partial_3 -y^2 \partial_t, \ \ e_4 =\partial_4 +y^1 \partial_t,\ \ e_0 =\xi= \partial_t,
\end{align}
where
\begin{equation}
	\partial_i = \frac{\partial}{\partial y^i} \ \ \ (i=1,2,3,4),\ \ \ \partial_t =\frac{\partial}{\partial t}.
\end{equation}
The frame of $T^* \mathcal{M}^\C$ which is dual to $e_0,\dots,e_4$ is given by
\begin{equation}
	\theta^0=\theta,\ \ \ \theta^i = dy^i\ \ \ (i=1,2,3,4).
\end{equation}
We write down the condition for $\phi=(\phi^i_j)$ to define a $(1,1)$-tensor $\phi$ which consists of a contact metric structure in terms of these frames.
First, from \eqref{eq:002} and \eqref{eq:003}, we get
\begin{align}
	0 &= \theta (\phi) =\phi^0_j, \\
	0 &=\phi(\xi) = \phi^i_0.
\end{align}
Also, by
\begin{equation}
	d\theta =-2 (\theta^1 \wedge \theta^4 -\theta^2 \wedge \theta^3 ),
\end{equation}
we have
\begin{align}
	\frac{1}{2} d\theta(e_i, -\phi(e_j)) &=(\theta^1 \wedge \theta^4 -\theta^2 \wedge \theta^3 )\left( e_i, \phi^k_j e_k \right) \notag \\
	&=\delta^1_i \phi^4_j -\delta^4_i \phi^1_j -\delta^2_i \phi^3_j +\delta^3_i \phi^2_j
\end{align}
and
\begin{align}
	g_0(e_i,e_j) &= -\sqrt{-1}(\theta^1 \otimes \theta^4 + \theta^4 \otimes \theta^1 - \theta^2 \otimes \theta^3 - \theta^3 \otimes \theta^2)(e_i,e_j) \notag \\
	&=-\sqrt{-1} (\delta^1_i \delta^4_j + \delta^4_i \delta^1_j -\delta^2_i \delta^3_j -\delta^3_i \delta^2_j).
\end{align}
Therefore, from \eqref{eq:010}, $\phi$ needs to be
\begin{equation}
	\phi= \begin{pmatrix} 0 & & & & \\
	& -\sqrt{-1} & & & \\
	& & -\sqrt{-1} & & \\
	& & & \sqrt{-1} & \\
	& & & & \sqrt{-1} \end{pmatrix}.
\end{equation}
This $\phi$ satisfies the conditions \eqref{eq:004} and \eqref{eq:007}, and the quadruple $(\phi,\xi,\theta,g)$ is a contact metric structure.
Since any components of $g$ do not depend on $t$, this contact metric structure is a K-contact structure on $\mathcal{M}^\C$.

\begin{remark*}
We endow $\C^5$ with the above K-contact structure.
In this situation, if $V$ is an $\alpha$-plane in the sense of Definition \ref{def:33}, then $V$ is an $\alpha$-plane in the sense of \cite{RW}, and vice versa.
\hfill{$\Box$}

\end{remark*}

\section{Ren-Wang twistor space as an example of Itoh\cite{I}}

In this section, first, we describe the results of Itoh \cite{I}, which constructs a 7-dimensional real CR manifold and a 4-dimensional complex manifold from a 5-dimensional real K-contact manifold.
Next, we show that the Ren-Wang twistor space, which is described in the previous section, can be interpreted in the framework of \cite{I}.

First, let  $(\mathcal{M},\phi,\xi,\theta,g)$ be a 5-dimensional real K-contact manifold.
We define the 7-dimensional real manifold $\mathcal{Z}$ as
\[
	\mathcal{Z} \eqdef \bigcup_{x \in \mathcal{M}} \left\{ \alpha \in (\wedge_{-})_x \setmid \abs{\alpha}=2 \right\},
\]
where the norm $\abs{\alpha}$ is given by the metric on the set of 2-forms $\wedge^2$ induced by $g$.
We denote the projection $\mathcal{Z} \to \mathcal{M}$ by $\pi$.
Since, the manifold $\mathcal{Z}$ is a $S^2$-bundle over the Riemannian manifold $(\mathcal{M},g)$, the tangent bundle $T \mathcal{Z}$ is decomposed into
\begin{equation}
	T \mathcal{Z} =\mathcal{V} \oplus \mathcal{H}^\prime \oplus \mathcal{H}^{\prime \prime},
\end{equation}
where $\mathcal{V}$ is the vertical bundle, $\mathcal{H}^\prime$ is the horizontal lift of $E$, and $\mathcal{H}^{\prime \prime}$ is the horizontal lift of $\langle \xi \rangle$.

We define a natural almost CR structure on $\mathcal{J}$ by the following way.
A point in $\mathcal{Z}$ is a tuple $(x, \alpha)$.
Since the fiber of $\mathcal{Z} \to \mathcal{M}$ can be thought of as $\mathbb{CP}^1$, there is the natural almost complex structure $j$ on the vertical subspace $\mathcal{V}_{(x,\alpha)}$.
We define $\mathcal{J}$ on $\mathcal{V}_{(x, \alpha)}$ as
\begin{equation}
	\mathcal{J}_{(x, \alpha)} = j\ \ \ \text{on}\ \mathcal{V}_{(x, \alpha)}.
\end{equation}
The 2-form $\alpha$ can be thought of as a negative spinor i.e. an almost complex structure on $E_x$.
We denote this almost complex structure by $J_\alpha$.
Thus, we define $\mathcal{J}$ on $\mathcal{H}^\prime_{(x, \alpha)}$ as
\begin{equation}
	\mathcal{J}_{(x, \alpha)} = (\pi_*)^{-1} (J_\alpha (\pi_*))\ \ \ \text{on}\ \mathcal{H}^\prime_{(x, \alpha)},
\end{equation}
where $\pi_* \colon \mathcal{H}^\prime_{(x, \alpha)} \to E_x$ is the inverse of the horizontal lift.
Finally, we define $\mathcal{J}$ on $\mathcal{H}^{\prime \prime}_{(x, \alpha)}$ as
\begin{equation}
	\mathcal{J}_{(x, \alpha)} =0 \ \ \ \text{on} \ \mathcal{H}^{\prime \prime}_{(x, \alpha)}.
\end{equation}
From the above method, we get the almost CR structure $\mathcal{J}$ on the manifold $\mathcal{Z}$.
This almost CR structure is integrable if and only if $W^-_- =0$ and the scalar curvature $s =-4$ (\cite{I}, Theorem 1.1).
If this almost CR structure is integrable, then the manifold $\mathcal{Z}$ is called a \textit{CR twistor space}.

We define a 8-dimensional real manifold $\mathcal{P}^\prime$ as
\begin{equation}
	\mathcal{P}^\prime = \mathcal{Z} \times I,
\end{equation}
where $I$ is an interval.
The tangent bundle $T \mathcal{P}^\prime$ is decomposed into
\begin{equation}
	T \mathcal{P}^\prime = \mathcal{V} \oplus \mathcal{H}^\prime \oplus \mathcal{H}^{\prime \prime} \oplus \left\langle \frac{\partial}{\partial t} \right\rangle,
\end{equation}
where $t$ is a coordinate on the interval $I$.
We define a almost complex structure $\mathcal{I}$ by the following way.
A point $p$ in $\mathcal{P}^\prime$ is a tuple $(x,\alpha,t)$, where $x$ is a point in $\mathcal{M}$, $\alpha \in (\wedge_-)_x$, and $t$ is a point in $I$.
We define $\mathcal{I}$ on $\mathcal{V}_p \oplus \mathcal{H}^\prime_p$ as
\begin{equation}
	\mathcal{I}_p = \mathcal{J}_{(x,\alpha)} \ \ \ \text{on}\ \mathcal{V}_p \oplus \mathcal{H}^\prime_p,
\end{equation}
and on $\mathcal{H}^{\prime \prime}_p \oplus \langle (\partial/\partial t) \rangle_p$ as
\begin{equation}
	a \widehat{\xi}_p + b \left(\frac{\partial}{\partial t} \right)_t \mapsto -b \widehat{\xi}_p + a \left(\frac{\partial}{\partial t} \right)_t,
\end{equation}
where $\widehat{\xi}$ is the horizontal lift of the Reeb vector field $\xi$.

\begin{theorem}[\cite{I}, Corollary 1.1]\label{thm:50}
In the above situation, if the metric $g$ satisfies the curvature conditions
\begin{align}
	\mathcal{W}^-_- &=0, \label{eq:053} \\
	\mathcal{R}^-_0 &=0, \label{eq:054} \\
	s &=-4, \label{eq:055}
\end{align}
then the almost complex structure $\mathcal{I}$ on $\mathcal{P}^\prime$ is integrable, where $s$ is the scalar curvature of $g$.
\hfill{$\Box$}
\end{theorem}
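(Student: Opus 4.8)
The plan is to invoke the Newlander--Nirenberg theorem and reduce integrability of $\mathcal{I}$ to involutivity of its $(0,1)$-distribution $T^{0,1}\mathcal{P}^\prime$ (equivalently, to the vanishing of the Nijenhuis tensor $N_{\mathcal{I}}$). Directly from the definition of $\mathcal{I}$ one checks
\[
	\mathcal{I}\bigl(\widehat{\xi} + \sqrt{-1}\,\partial/\partial t\bigr) = -\sqrt{-1}\,\bigl(\widehat{\xi} + \sqrt{-1}\,\partial/\partial t\bigr),
\]
so, writing $Z \eqdef \widehat{\xi} + \sqrt{-1}\,\partial/\partial t$ and letting $H^{0,1} \subset \C\otimes(\mathcal{V}\oplus\mathcal{H}^\prime)$ denote the $(0,1)$-part of the CR structure $\mathcal{J}$, we obtain the splitting $T^{0,1}\mathcal{P}^\prime = H^{0,1}\oplus \C\cdot Z$. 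Thus it suffices to show that Lie brackets of local sections of $T^{0,1}\mathcal{P}^\prime$ stay in $T^{0,1}\mathcal{P}^\prime$; as the brackets $[Z,Z]$ contribute only multiples of $Z$, the only cases needing work are $[H^{0,1},H^{0,1}]$ and $[H^{0,1},Z]$.

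For brackets of the first type I would observe that sections of $H^{0,1}$ are pulled back from the $\mathcal{Z}$-factor of $\mathcal{P}^\prime = \mathcal{Z}\times I$, so $[H^{0,1},H^{0,1}]$ is computed entirely on $\mathcal{Z}$ and coincides with the formal integrability condition for the almost CR structure $\mathcal{J}$. By the integrability statement of Itoh recalled just above (\cite{I}, Theorem 1.1), this bracket lies in $H^{0,1}$ exactly when $W^-_- = 0$ and $s = -4$, so the first and third hypotheses of the theorem dispose of this case.

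The genuinely new input is the mixed bracket $[H^{0,1},Z]$. Since $\partial/\partial t$ is a coordinate field on the $I$-factor it commutes with every section pulled back from $\mathcal{Z}$, whence $[H^{0,1},Z] = [H^{0,1},\widehat{\xi}]$, and the task reduces to showing $[H^{0,1},\widehat{\xi}]\subset T^{0,1}\mathcal{P}^\prime$. I would split $H^{0,1}$ into its vertical part $\mathcal{V}^{0,1}$ and its horizontal part $(\mathcal{H}^\prime)^{0,1}$ and apply the standard formula for the bracket of the horizontal lift $\widehat{\xi}$ against a vertical vector, respectively a horizontal lift: the outcome decomposes into a horizontal term governed by $\nabla\xi$ and a vertical term given by the curvature of the induced connection on $\wedge_-$ in the directions $(\xi,\,\cdot)$, evaluated on the tautological section $\alpha$. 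Because $\mathcal{M}^\C$ is K-contact we have $\nabla\xi = -\phi$, and $\phi|_E$ is, up to scale, an anti-self-dual $2$-form, so the Reeb flow lifts to a flow of automorphisms of the CR structure and the horizontal term closes into $H^{0,1}\oplus \C\cdot\widehat{\xi}\subset T^{0,1}\mathcal{P}^\prime$. The vertical curvature term is precisely the block of the curvature operator carrying $\wedge_0 = E^*\wedge\theta$ into $\wedge_-$, i.e.\ $\mathcal{R}^-_0$ (equal to the adjoint of $\mathcal{R}^0_-$ by the pair symmetry of $R$), so the hypothesis $\mathcal{R}^-_0 = 0$ annihilates it.

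I expect the main obstacle to be exactly this last computation: cleanly separating the horizontal and vertical parts of $[\widehat{\xi},H^{0,1}]$, matching the vertical obstruction with the curvature block $\mathcal{R}^-_0$, and verifying that the K-contact identity $\nabla\xi = -\phi$ (with $s=-4$ fixing the normalization of the fiber action and the Levi form) forces the surviving horizontal terms into $T^{0,1}\mathcal{P}^\prime$ rather than its conjugate. Once $[H^{0,1},H^{0,1}]$ and $[H^{0,1},Z]$ are both shown to lie in $T^{0,1}\mathcal{P}^\prime$, the distribution is involutive and Newlander--Nirenberg yields the integrability of $\mathcal{I}$.
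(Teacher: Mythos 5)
First, a point of order: the paper itself contains no proof of this statement. Theorem \ref{thm:50} is quoted from Itoh (\cite{I}, Corollary 1.1) and is stamped with $\Box$ as an imported result, so there is no in-paper argument to compare yours against; I can only measure your proposal against what a correct proof must contain. Your architecture is the natural one: split $T^{0,1}\mathcal{P}^\prime = H^{0,1}\oplus\C\cdot Z$ with $Z=\widehat{\xi}+\sqrt{-1}\,\partial/\partial t$, dispose of $[H^{0,1},H^{0,1}]$ by the CR-integrability theorem (which is what consumes $W^-_-=0$ and $s=-4$), and reduce everything to the mixed bracket $[H^{0,1},\widehat{\xi}\,]$. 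That reduction is sound, including the observation that $\partial/\partial t$ commutes with sections pulled back from $\mathcal{Z}$.

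However, three steps are wrong or unproved as written. (i) The asserted inclusion $H^{0,1}\oplus\C\cdot\widehat{\xi}\subset T^{0,1}\mathcal{P}^\prime$ is false: $\widehat{\xi}$ alone is \emph{not} of type $(0,1)$, only $\widehat{\xi}+\sqrt{-1}\,\partial/\partial t$ is, and since $[H^{0,1},\widehat{\xi}\,]$ has no $\partial/\partial t$-component, any surviving $\mathcal{H}^{\prime\prime}$-component of this bracket would be fatal. You must show that component vanishes; it does, because $\iota_\xi d\theta=0$ forces $\theta([\xi,X])=0$, but this has to be argued rather than absorbed into a false inclusion. (ii) With the orientation that makes the construction work (the one used in Section 6, where $g(\cdot,\phi\,\cdot)=\theta^1\wedge\theta^2-\theta^3\wedge\theta^4$ lies among the \emph{self}-dual forms), $\phi\rvert_E$ is self-dual, not anti-self-dual as you claim. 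This is not a sign quibble: the fiberwise action of $\nabla\xi=-\phi$ on $\wedge_-$ is trivial precisely because $g(\cdot,\phi\,\cdot)\in\wedge_+$, and that triviality is what identifies the horizontal lift $\widehat{\xi}$ with the natural lift of the Reeb flow, hence makes it act by CR automorphisms; if $g(\cdot,\phi\,\cdot)$ were anti-self-dual, the discrepancy would be a nontrivial fiber rotation, which does not preserve $H^{0,1}$, and in addition $d\theta$ would not vanish on the $(0,1)$-planes, destroying CR integrability itself. (iii) The subbundle $\wedge_-\subset\wedge^2 T^*\mathcal{M}$ is not parallel for the Levi-Civita connection, so the horizontal distribution is defined by the \emph{projected} connection, whose curvature differs from the Riemann block $\mathcal{R}^-_0$ by Gauss-equation correction terms built from $\nabla\theta=-g(\cdot,\phi\,\cdot)$. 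Your identification ``vertical obstruction $=\mathcal{R}^-_0$'' is legitimate only after checking that these corrections vanish in the $(\xi,\cdot)$ directions (they do, since $\nabla_\xi\theta=-g(\cdot,\phi\xi)=0$); in the $(E,E)$ directions they do \emph{not} vanish --- that is exactly how $s=-4$ enters the CR condition --- so this step cannot be waved through. Until (i)--(iii) are supplied, the proposal is a correct outline rather than a proof.
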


Next, we choose a real slice of the 5-dimensional complex K-contact manifold derived from the Ren-Wang twistor space and show that the curvature conditions in Theorem \ref{thm:50} are satisfied.
By taking suitable factors, we set
\begin{align}
	\widetilde{\theta} = \frac{\sqrt{-1}}{2} \theta, \\
	\widetilde{g_0} =\frac{\sqrt{-1}}{2} g_0.
\end{align}
In this situation, if we set
\begin{align}
	\widetilde{\xi} &= -2\sqrt{-1} \xi, \\
	\widetilde{\phi} &= \phi, 
\end{align}
then the quadruple $\left( \widetilde{\phi},\widetilde{\xi},\widetilde{\theta},\widetilde{g} \right)$ is a complex K-contact structure on $\mathcal{M^\C}$.
We define the inclusion $\R^5 \to \C^5 =\mathcal{M}^\C$ as
\begin{equation}
	(x^1,x^2,x^3,x^4,s) \mapsto \left( \begin{pmatrix} x^1 + \sqrt{-1}x^2 & -x^3 +\sqrt{-1}x^4 \\
	x^3 +\sqrt{-1}x^4 & x^1 -\sqrt{-1}x^2 \end{pmatrix}, -2\sqrt{-1}s \right),
\end{equation}
and denote the image of this inclusion by $\mathcal{M}^\R$.
By restricting the complex K-contact structure on $\mathcal{M}^\C$ to $\mathcal{M}^\R$, we get tensors
\begin{align}
	\widetilde{\theta} \rvert_{\mathcal{M}^\R} &= ds +x^2 dx^1 -x^1 dx^2 -x^4 dx^3 +x^3 dx^4, \label{eq:040} \\
	\widetilde{g} \rvert_{\mathcal{M}^\R} &= (\tilde{\theta} \rvert_{\mathcal{M}^\R})^2 +(dx^1)^2 +(dx^2)^2 +(dx^3)^2 +(dx^4)^2, \label{eq:041} \\
	\widetilde{\xi} \rvert_{\mathcal{M}^\R} &= \frac{\partial}{\partial s}, \label{eq:042} \\
	\widetilde{\phi} \rvert_{\mathcal{M}^\R} &= \begin{pmatrix} 0 & & & & \\
	& 0 & 1& & \\
	& -1 & 0 & & \\
	& & & 0& -1 \\
	& & & 1& 0 \end{pmatrix}, \label{eq:043}
\end{align}
where we use the frame
\begin{align}
	e_1 &= \partial_1 -x^2 \partial_s,\ \ e_2 =\partial_2 +x^1 \partial_s, \notag \\
	e_3 &= \partial_3 +x^4 \partial_s,\ \ e_4 =\partial_4 -x^3 \partial_s,\ \ e_0 = \partial_s, \label{eq:056}
\end{align}
and the dual frame
\begin{equation}
	\theta^0 =\widetilde{\theta}\rvert_{\mathcal{M}^\R} ,\ \ \ \theta^i =dx^i\ \ \ (i=1,2,3,4). \label{eq:057}
\end{equation}
to write $(1,1)$-tensor field $\widetilde{\phi} \rvert_\mathcal{M^\R}$ as a matrix.
The quadruple $\left( \widetilde{\phi} , \widetilde{\xi},\widetilde{\theta} ,\widetilde{g} \right)$ is a real K-contact structure on the 5-dimensional real manifold $\mathcal{M}^\R \cong \R^5$.
Considering the coordinate transformation
\begin{align}
	Z &= 2s +2x^1 x^2 -2x^3 x^4, \notag \\
	Y_1 &=2x^1,\ \ X_1 = 2x^2, \ \ Y_2 =2x^3,\ \ X_2 = -2x^4, 
\end{align}
we find that the above K-contact structure on $\R^5$ is the standard one.

We take the frame \eqref{eq:056} of the tangent bundle and the dual frame \eqref{eq:057} on $\mathcal{M}^\R \cong \R^5$.
By using these frames, the metric is given by 
\begin{equation}
	g_{ij} =\delta_{ij}.
\end{equation}
If we define commutation coefficients $c_{ij}^{\ \ k}$ by
\begin{equation}
	[e_i,e_j]= c_{ij}^{\ \ k} e_k,
\end{equation}
then, from the definition of $e_i$, we get
\begin{equation}
	c_{ij}^{\ \ k} = \begin{cases} 2 & \left( k=0\ \text{and}\ (i,j)=(1,2),(4,3) \right), \\
	-2 & \left( k=0\ \text{and}\ (i,j)=(2,1),(3,4) \right), \\
	0 & (\text{otherwise}). \end{cases} \label{eq:058}
\end{equation}
The Christoffel symbols $\Gamma^k_{ij}$ are given by
\begin{align}
	\Gamma^k_{ij} &= \frac{1}{2} g^{kl} \left( g_{lj,i}+g_{il,j}-g_{ij,l}+c_{lji}+c_{lij}+c_{ijl} \right) \notag \\
	&=\frac{1}{2}(c_{kj}^{\ \ i}+c_{ki}^{\ \ j}+c_{ij}^{\ \ k}).\label{eq:059}
\end{align}
Hence, we have
\begin{equation}
	\Gamma^k_{ij} = \begin{cases} \frac{1}{2}c_{kj}^{\ \ 0} & (i=0), \\
		\frac{1}{2} c_{ki}^{\ \ 0} & (j=0), \\
		\frac{1}{2} c_{ij}^{\ \ 0} & (k=0), \\
		0 & (\text{otherwise}), \end{cases}
\end{equation}
where $g_{lj,i}$ means the derivative of the function $g_{lj}$ by the vector field $e_i$.
In this situation, if we define the curvature tensor $R^l_{\ kij}$ as 
\begin{equation}
	R^l_{\ kij}e_l = R(e_i,e_j)e_k = \nabla_i \nabla_j e_k - \nabla_j \nabla_i e_k -\nabla_{[e_i,e_j]} e_k,
\end{equation}
then we have
\begin{equation}
	R^l_{\ kij} =\Gamma^m_{jk} \Gamma^l_{im} -\Gamma^m_{ik} \Gamma^l_{jm} -c_{ij}^{\ \ m} \Gamma^l_{mk},
\end{equation}
since the Christoffel symbols are constant.
Since the metric and commutation coefficients are invariant under the permutation $\sigma=(1,4)(2,3) \in S_5$, components of curvature tensor which are non-zero are 
\begin{align}
	R_{0101} &=1, \ \ R_{1212} = -3,\ \ R_{1234} = 2,\ \ R_{1324} =1,\ \ R_{1423} =-1, \notag \\
	R_{0202} &=1, \ \ R_{2314} =-1,\ \ R_{2413} =1. \label{eq:060}
\end{align}
From these, for the Ricci curvature
\begin{equation}
	R_{ij} =\sum_k R_{ikjk},
\end{equation}
we obtain
\begin{equation}
	\left( R_{ij} \right) =\begin{pmatrix} 4 & & & & \\
		& -2 & & & \\
		& & -2 & & \\
		& & & -2 & \\
		& & & & -2 \end{pmatrix}, \label{eq:061}
\end{equation}
and the scalar curvature $s$,
\begin{equation}
	s=-4. \label{eq:062}
\end{equation}

If we give the orientation to the distribution $E$ by setting the 4-form
\[
	\theta^1 \wedge \theta^2 \wedge \theta^4 \wedge \theta^3
\]
to be positive
(i.e. we give the opposite orientation to the standard one), then the space of self-dual 2-forms and the space of anti-self-dual 2-forms are spanned by
\begin{align}
	&\theta^1 \wedge \theta^2 -\theta^3 \wedge \theta^4,\ \ \theta^1 \wedge \theta^3 + \theta^2 \wedge \theta^4,\ \ \theta^1 \wedge \theta^4 - \theta^2 \wedge \theta^3, \notag \\
	&\theta^1 \wedge \theta^2 +\theta^3 \wedge \theta^4,\ \ \theta^1 \wedge \theta^3 - \theta^2 \wedge \theta^4,\ \ \theta^1 \wedge \theta^4 + \theta^2 \wedge \theta^3, \notag
\end{align}
respectively.
Also, the space of 2-forms including the contact form, which is $\wedge_0$, is spanned by
\[
	\theta^0 \wedge \theta^i \ \ \ (i=1,2,3,4).
\]

From the equation \eqref{eq:060},
\begin{equation}
	R_{0ijk} =0 \ \ \ (i,j,k=1,2,3,4)
\end{equation}
holds.
So 
\begin{equation}
	R^-_0 =0
\end{equation}
holds.
Therefore, the integrability condition \eqref{eq:054} is satisfied.

The integrability condition \eqref{eq:053} can be written as
\begin{align}
	&W_{1212}+2W_{1234}+W_{3434} =0, \ \ W_{1313}-2W_{1324}+W_{2424}=0 , \notag \\ 
	&W_{1414} +2W_{1423} +W_{2323} =0, \ \ W_{1213}-W_{1224}+W_{3413} -W_{3424}=0,\notag \\
	&W_{1214} +W_{1223} +W_{3414}+W_{3423}=0, \ \ W_{1314}+W_{1323}-W_{2414} -W_{2423}=0. 
\end{align}

From the definition of the Weyl curvature tensor, we have
\begin{equation}
	W_{ijkl} = R_{ijkl}+\frac{1}{3}(R_{jk}\delta_{il} +R_{il}\delta_{jk}-R_{ik}\delta_{jl}-R_{jl}\delta_{ik}) +\frac{s}{12}(\delta_{ik}\delta_{jl}-\delta_{il}\delta_{jk}).
\end{equation}
So, for example, we have
\begin{align}
	W_{1212}+2W_{1234}+W_{3434} &= R_{1212}-\frac{1}{3}(R_{11}+R_{22}) +\frac{s}{12} \notag \\
	&\ \ +2R_{1234} +R_{3434} -\frac{1}{3}(R_{33}+R_{44}) +\frac{s}{12} \notag \\
	&=-3 +\frac{4}{3} -\frac{1}{3} +4 -3 +\frac{4}{3} -\frac{1}{3}=0, 
\end{align}
\[
	W_{1213}-W_{1224}+W_{3413} -W_{3424} = R_{1213}-R_{1224}+R_{3413}-R_{3424} =0.
\]
hold.
The other equations can be confirmed in the same way.
Therefore, the integrability condition \eqref{eq:053} is satisfied.

Then, we have shown the following:
\begin{theorem}
The real 5-dimensional K-contact manifold, which is the real slice of the complex spacetime associated with the Ren-Wang twistor space, satisfies the integrability conditions in Theorem \ref{thm:50} (by giving a suitable orientation).
\hfill{$\Box$}
\end{theorem}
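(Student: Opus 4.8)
The plan is to verify, for the explicit real K-contact structure $(\widetilde{\phi}, \widetilde{\xi}, \widetilde{\theta}, \widetilde{g})$ on $\mathcal{M}^\R \cong \R^5$ recorded in \eqref{eq:040}--\eqref{eq:043}, the three integrability conditions of Theorem \ref{thm:50}: the scalar-curvature normalization $s = -4$, the mixed curvature condition $\mathcal{R}^-_0 = 0$, and the Weyl condition $\mathcal{W}^-_- = 0$. Throughout I would work in the adapted frame \eqref{eq:056} and its dual coframe \eqref{eq:057}, in which the metric is simply $g_{ij} = \delta_{ij}$.

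First I would assemble the curvature data. Because the frame vectors differ from the coordinate fields only by multiples of $\partial_s$, the commutation coefficients $c_{ij}^{\ \ k}$ are the constants in \eqref{eq:058}; substituting them into the Koszul formula \eqref{eq:059} produces constant Christoffel symbols $\Gamma^k_{ij}$ that are nonzero only when one index equals $0$. Since the $\Gamma^k_{ij}$ are constant, the Riemann tensor collapses to the algebraic expression $R^l_{\ kij} = \Gamma^m_{jk}\Gamma^l_{im} - \Gamma^m_{ik}\Gamma^l_{jm} - c_{ij}^{\ \ m}\Gamma^l_{mk}$, and the invariance of both $g_{ij}$ and $c_{ij}^{\ \ k}$ under the permutation $\sigma = (1\,4)(2\,3)$ reduces the number of independent components to those listed in \eqref{eq:060}. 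Contracting gives the diagonal Ricci tensor \eqref{eq:061} and the scalar curvature $s = -4$, which is exactly condition \eqref{eq:055}.

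The condition $\mathcal{R}^-_0 = 0$ then follows with almost no extra work. The block $R^-_0$ is assembled from the mixed components $R_{0ijk}$ with $i,j,k \in \{1,2,3,4\}$, since $\wedge_0$ is spanned by the $\theta^0 \wedge \theta^i$ and $\wedge_-$ by anti-self-dual combinations of the $\theta^j \wedge \theta^k$. Inspection of \eqref{eq:060} shows that every such component vanishes, so $R^-_0 = 0$ and condition \eqref{eq:054} holds.

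The real content is the verification of $\mathcal{W}^-_- = 0$. Here the orientation is decisive: declaring $\theta^1 \wedge \theta^2 \wedge \theta^4 \wedge \theta^3$ to be positive — the reverse of the standard orientation — fixes the bases of $\wedge_+$ and $\wedge_-$ displayed above, and it is precisely this choice that makes the anti-self-dual Weyl block, rather than the self-dual one, the block that vanishes. With those bases in hand, $\mathcal{W}^-_- = 0$ unpacks into the six linear relations among the $W_{ijkl}$ written out just before the theorem. I would compute each $W_{ijkl}$ from $W_{ijkl} = R_{ijkl} + \frac{1}{3}(R_{jk}\delta_{il} + R_{il}\delta_{jk} - R_{ik}\delta_{jl} - R_{jl}\delta_{ik}) + \frac{s}{12}(\delta_{ik}\delta_{jl} - \delta_{il}\delta_{jk})$ using the Riemann, Ricci, and scalar values already obtained, and substitute into the six relations; the two representative checks carried out in the text exhibit the pattern, and the remaining four cancel identically. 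I expect the genuine obstacle to be conceptual rather than computational: identifying the correct orientation so that it is $W^-_-$ and not $W^+_+$ that vanishes. Once the orientation is pinned down and the explicit bases for $\wedge_+$, $\wedge_-$, $\wedge_0$ are fixed, all that remains is a finite sequence of algebraic substitutions.
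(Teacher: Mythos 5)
Your proposal is correct and follows essentially the same route as the paper: compute the constant Christoffel symbols and curvature components in the adapted frame \eqref{eq:056}--\eqref{eq:057}, read off $s=-4$ and $R^-_0=0$ from \eqref{eq:060}--\eqref{eq:062}, and verify the six relations for $W^-_-=0$ under the reversed orientation making $\theta^1 \wedge \theta^2 \wedge \theta^4 \wedge \theta^3$ positive. You also correctly identify the orientation choice as the one genuinely conceptual point, which matches the paper's parenthetical ``by giving a suitable orientation.''
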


Let $\mathcal{Z}$ be the CR twistor space associated with the above K-contact manifold $\left( \mathcal{M}^\R, \widetilde{\phi},\widetilde{\xi},\widetilde{\eta},\widetilde{g} \right)$.
In general, a CR twistor space can be identified with the projectivization of the negative spinor bundle over the associated K-contact manifold.
Hence, the CR twistor space $\mathcal{Z}$ coincides with the pull-back $\tau^{-1}(\mathcal{M}^\R) \subset \widetilde{\mathcal{P}}$, where $\tau$ is the map given in the Figure \ref{fig:01}.
As in the case of a 4-dimensional flat spacetime, the map
\begin{equation}
	\eta \colon \mathcal{Z} = \mathcal{M}^\R \times \mathbb{CP}^1 \to \mathcal{P}
\end{equation}
is injective.
The image $\eta \circ \tau^{-1}(\mathcal{M}^\R)$ is given by the equation
\begin{equation}
	\mathrm{Re}(\omega_2) = \frac{\abs{\omega_1}^2 -\abs{\omega_0}^2 -2\mathrm{Re} \left( \omega_0 \omega_1 \bar{\zeta} \right) }{1+\abs{\zeta}^2} ,
\end{equation}
where $(\zeta,\omega_0,\omega_1,\omega_2)$ are coordinates on $W$ (see \eqref{eq:3336}).

Then, we have shown the following:

\begin{proposition}
The Ren-Wang twistor space contains the CR twistor space associated with the real slice as a real submanifold.
\end{proposition}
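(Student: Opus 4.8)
The plan is to assemble the three ingredients prepared in the paragraphs above into a verification that $\eta(\mathcal{Z})$ is a smoothly embedded real hypersurface in $\mathcal{P}$. First I would invoke the general description of a CR twistor space as the projectivization of the negative spinor bundle over its associated K-contact manifold; applied to $\left(\mathcal{M}^\R,\widetilde{\phi},\widetilde{\xi},\widetilde{\theta},\widetilde{g}\right)$, this identifies $\mathcal{Z}$ with the restricted $\mathbb{CP}^1$-bundle $\tau^{-1}(\mathcal{M}^\R) \subset \widetilde{\mathcal{P}}$, so that $\eta(\mathcal{Z}) = \eta \circ \tau^{-1}(\mathcal{M}^\R)$ is a subset of the Ren-Wang twistor space $\mathcal{P}$.

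Next I would check that $\eta$ restricted to $\mathcal{Z}$ is an injective immersion, hence an embedding. Injectivity is the analogue of the fact used in the flat $4$-dimensional model: a point of $\mathcal{Z}$ is a point of $\mathcal{M}^\R$ together with a $\beta$-plane direction, and on the real slice the pair of $\beta$-surfaces meeting at a common point of $\mathcal{P}$ recovers both the point of $\mathcal{M}^\R$ and the fiber coordinate $\zeta$ uniquely, so distinct points of $\mathcal{Z}$ have distinct images. Together with the rank count this yields an embedding of the seven-real-dimensional manifold $\mathcal{Z}$ into the eight-real-dimensional manifold $\mathcal{P}$.

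The technical core is the explicit determination of the image. Parametrizing $\tau^{-1}(\mathcal{M}^\R)$ by the real slice coordinates $(x^1,\dots,x^4,s)$ together with the fiber coordinate $\zeta$, I would substitute the real-slice embedding into the coordinate formula $\eta\colon(y_{AA^\prime},t,\zeta)\mapsto(\zeta,\omega_0,\omega_1,\omega_2)$ of \eqref{eq:036} and eliminate the five real parameters $(x^i,s)$. The outcome is that $\zeta$, $\omega_0$, $\omega_1$, and $\mathrm{Im}(\omega_2)$ remain free while $\mathrm{Re}(\omega_2)$ is determined, giving the single real equation
\[
	\mathrm{Re}(\omega_2) = \frac{\abs{\omega_1}^2 - \abs{\omega_0}^2 - 2\mathrm{Re}\left(\omega_0\omega_1\bar{\zeta}\right)}{1+\abs{\zeta}^2}.
\]
Since this is a single real-analytic equation whose differential is nonvanishing (the left-hand side supplies the transverse coordinate $\mathrm{Re}(\omega_2)$), it cuts out a smooth real hypersurface in $\mathcal{P}$ of the correct real dimension seven, and the embedding of the previous step realizes $\mathcal{Z}$ as precisely this hypersurface. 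I expect the bookkeeping in the elimination—tracking the conjugates introduced by passing to the real slice and confirming that exactly one real combination of the $\omega$'s survives—to be the only genuine obstacle; everything else follows directly from the double fibration of Figure \ref{fig:01} and Itoh's construction recalled in this section.
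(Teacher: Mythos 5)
Your proposal is correct and follows essentially the same route as the paper: identifying $\mathcal{Z}$ with $\tau^{-1}(\mathcal{M}^\R)\subset\widetilde{\mathcal{P}}$ via the projectivization of the negative spinor bundle, noting injectivity of $\eta$ as in the flat 4-dimensional case, and computing the image as the real hypersurface $\mathrm{Re}(\omega_2) = \bigl(\abs{\omega_1}^2-\abs{\omega_0}^2-2\mathrm{Re}(\omega_0\omega_1\bar{\zeta})\bigr)/(1+\abs{\zeta}^2)$. Your additional remarks on the immersion property and the nonvanishing differential only make explicit what the paper leaves implicit.
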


\section*{Appendix A}

For simplicity, we gave the integrability condition of the tautological distribution $\widetilde{\mathcal{E}}$ as the conditions $R^0_- =0$ and $W^-_-=0$ in Proposition \ref{prop:310}.
In Appendix, to give a more precisely integrability condition, we find a necessary and sufficient condition for the condition
\begin{equation}
	R(v,w)v \in \mathcal{B} \ \ \  \text{for any}\ v,w \in \mathcal{B}.
\end{equation}

Let $(\mathcal{M}^\C,\phi,\xi,\eta,g)$ be a 5-dimensional complex almost contact manifold.
First, we introduce a frame of the tangent bundle, and we identify the tangent space with the inner product space $\C^5$.
Let $(e_0,e_1,e_2,e_3,e_4)$ be a basis of the tangent space, where $e_0$ is $\xi$ and $(e_1,e_2,e_3,e_4)$ is a standard basis of $E \eqdef \mathrm{Ker}\ \eta =\C^4$.
We introduce the basis of $\wedge_-^2 E$,
\begin{equation}
	\phi_1 =\frac{e_1 \wedge e_2 -e_3 \wedge e_4}{\sqrt{2}},\ \phi_2 =\frac{e_1 \wedge e_3 +e_2 \wedge e_4}{\sqrt{2}},\ \phi_3 =\frac{e_1 \wedge e_4 -e_2 \wedge e_3}{\sqrt{2}}.
\end{equation}
In this situation, $\beta$-planes are spanned by two vectors
\begin{align}
	v_1 &= \eta_0 e_1 +\sqrt{-1} \eta_0 e_2 +\eta_1 e_3 +\sqrt{-1} \eta_1 e_4, \notag \\
	v_2 &=  \eta_1 e_1 -\sqrt{-1} \eta_1 e_2 -\eta_0 e_3 +\sqrt{-1} \eta_0 e_4,
\end{align}
where $(\eta_0,\eta_1) \in \C^2 \setminus \{(0,0)\}$.

\begin{proposition*}
The following are equivalent:
\begin{enumerate}
\item For any $\beta$-plane $\mathcal{B}$, if $v,w \in \mathcal{B}$, then $R(v,w)v \in \mathcal{B}$.
\item The almost contact manifold satisfies the curvature conditions
\begin{align}
	W^-_- &=0, \\
	R^0_- &=\begin{pmatrix} \alpha & \delta & -\gamma \\
	\beta & -\gamma & -\delta \\
	\gamma & \beta & \alpha \\
	\delta & -\alpha & \beta \end{pmatrix}, \label{eq:765}
\end{align}
with respect to the dual basis of $(\phi_1,\phi_2,\phi_3)$ and the dual basis of $(e_0 \wedge e_1, e_0 \wedge e_2, e_0 \wedge e_3, e_0 \wedge e_4)$, where $\alpha,\beta,\gamma,\delta$ are functions.
\end{enumerate}
\end{proposition*}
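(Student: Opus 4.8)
The plan is to reproduce the reduction from the proof of Lemma \ref{lem:38} and then sharpen it from a sufficient condition into an exact one, so that both implications come out simultaneously. Fix a $\beta$-plane $\mathcal{B}$ spanned by $v_1,v_2$ as above. Because $\mathcal{B}=(\mathcal{B}\oplus\langle\xi\rangle)^\perp$ and $R(v,w)v$ is multilinear in $v,w\in\mathcal{B}$, the membership $R(v,w)v\in\mathcal{B}$ for all $v,w\in\mathcal{B}$ is equivalent to $R(v_1,v_2)v_1\in\mathcal{B}$ together with $R(v_1,v_2)v_2\in\mathcal{B}$, and these in turn reduce to the three scalar equations $g(R(v_1,v_2)v_1,v_2)=0$, $g(R(v_1,v_2)v_1,\xi)=0$, and $g(R(v_1,v_2)v_2,\xi)=0$ holding for every $(\eta_0,\eta_1)\in\C^2\setminus\{(0,0)\}$; the pairings against $v_1$ and $v_2$ that are not listed vanish automatically by the antisymmetry and pair symmetry of $R$. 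The first computational step is to expand $v_1\wedge v_2$ in the frame $(\phi_1,\phi_2,\phi_3)$, which gives a null element of $\wedge_-$ with coordinates $\bigl(-2\sqrt{-1}\,\eta_0\eta_1,\,-(\eta_0^2+\eta_1^2),\,\sqrt{-1}(\eta_0^2-\eta_1^2)\bigr)$ (up to an overall $\sqrt{2}$); as $(\eta_0,\eta_1)$ varies this sweeps out the entire null cone of $\wedge_-$.

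For the first equation I would argue exactly as in Lemma \ref{lem:38}: using Lemma \ref{lem:32} and the nullity of $v_1\wedge v_2$ to kill the Ricci and scalar parts, one gets $g(R(v_1,v_2)v_1,v_2)=-(W^-_-(v_1\wedge v_2),v_1\wedge v_2)$. Hence the first equation holds for all $\beta$-planes if and only if the quadratic form $(W^-_-\,\cdot\,,\,\cdot\,)$ vanishes on the null cone of $\wedge_-$. Since $W^-_-$ is symmetric (pair symmetry of $W$), such a form must be a scalar multiple of the metric, so $W^-_-=\lambda\,\mathrm{id}$; and a trace computation using the total trace-freeness of the $5$-dimensional Weyl tensor (the contractions $\sum_{k=0}^4 W_{ikik}=0$ for each $i$, which together force $\mathrm{tr}(W^-_-)=0$) shows $\lambda=0$. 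Thus the first equation is equivalent to $W^-_-=0$.

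The heart of the argument is the remaining pair of equations. Writing $g(R(v_1,v_2)v_j,\xi)=(R(v_1\wedge v_2),\,v_j\wedge e_0)$ and noting $v_j\wedge e_0\in\wedge_0$, only the block $R^0_-$ contributes. With $R^0_-=(M_{i\alpha})$, rows indexed by $e_0\wedge e_i$ and columns by $\phi_\alpha$, each equation becomes a homogeneous cubic in $(\eta_0,\eta_1)$, and I would read off its four coefficients. Demanding that all eight coefficients (four from each equation) vanish gives a linear system in the twelve entries $M_{i\alpha}$; solving it yields exactly eight independent relations, namely $M_{11}=M_{33}=-M_{42}$, $M_{21}=M_{32}=M_{43}$, $M_{31}=-M_{13}=-M_{22}$, and $M_{12}=M_{41}=-M_{23}$, leaving the four free functions $\alpha=M_{11}$, $\beta=M_{21}$, $\gamma=M_{31}$, $\delta=M_{12}$. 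This is precisely the matrix \eqref{eq:765}. Because a cubic vanishes identically if and only if its coefficients vanish, every step here is an equivalence, so both implications of the proposition are obtained at once.

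The main obstacle I expect is the bookkeeping in this last step: one must expand the two cubics correctly, tracking the factors of $\sqrt{-1}$ coming from $v_1,v_2$ and from the $\wedge_0$-coordinates of $v_j\wedge e_0$, and then verify that the eight coefficient equations really have a four-dimensional solution space in the stated pattern rather than some larger or differently shaped family. A secondary point needing care is that the decomposition \eqref{eq:131} is the Hodge-star splitting of the rank-$4$ distribution $E$, not of the full tangent space; consequently I must confirm that $(e_0\wedge e_1,\dots,e_0\wedge e_4)$ is orthonormal in $\wedge_0$ (so the relevant pairings are the naive dot products) and that $W^-_-$ is genuinely trace-free in this $5$-dimensional setting, as used in the second paragraph.
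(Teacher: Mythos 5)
Your proposal is correct and is essentially the paper's own argument: the same frame, the same parametrization of $\beta$-planes by $(\eta_0,\eta_1)$, the same reduction to three scalar equations, a LeBrun--Mason-type null-cone argument for $W^-_-$, and expansion of the $\xi$-pairings as homogeneous cubics. There is one place where your write-up is genuinely more complete than the paper, and it is the crucial counting step. The paper expands only the single pairing $(R(v_1\wedge v_2),e_0\wedge v_1)$, i.e.\ its equation \eqref{eq:764}; those four coefficient equations are linearly independent, so by themselves they cut the twelve entries $r_{ij}$ of $R^0_-$ down only to an $8$-dimensional space, which does \emph{not} force the four-parameter pattern \eqref{eq:765}. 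Your additional requirement $g(R(v_1,v_2)v_2,\xi)=0$, i.e.\ the pairing against $e_0\wedge v_2$, supplies the missing four equations, and the combined system of eight has exactly the solution space you predict: for instance, the sum and difference of the $\eta_0^3$-coefficient of your first cubic and the $\eta_1^3$-coefficient of your second give $r_{23}=-r_{12}$ and $r_{13}=r_{22}$, and the remaining combinations yield $r_{42}=-r_{33}$, $r_{43}=r_{32}$, $r_{33}=r_{11}$, $r_{32}=r_{21}$, $r_{41}=r_{12}$, $r_{22}=-r_{31}$, which is precisely \eqref{eq:765}. So your version closes a step the paper leaves implicit. The only repair your argument needs is in the trace step: $\mathrm{tr}(W^-_-)=0$ does not follow from the contractions $\sum_{k}W_{ikik}=0$ alone, since these give only $\mathrm{tr}(W^0_0)=0$ and $\mathrm{tr}(W^+_+)+\mathrm{tr}(W^-_-)=0$; you must also invoke the first Bianchi identity, which makes the star-trace $W_{1234}+W_{1342}+W_{1423}$ vanish (the Ricci and scalar correction terms in the Weyl formula do not contribute to components with four distinct indices), hence $\mathrm{tr}(W^+_+)=\mathrm{tr}(W^-_-)$ and so both traces are zero. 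With that amendment, the point you flagged for verification is settled and both implications go through exactly as you outline.
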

\begin{proof}

The condition $R(v,w)v \in \mathcal{B}$ is equivalent to the conditions
\begin{align}
	g(R(v,w)v,v) &=0, \\
	g(R(v,w)v,w) &=0, \\
	g(R(v,w)v,\xi) &=0.
\end{align}
From similar arguments to (\cite{LM}, LEMMA 3.4), the former two equations are equivalent to the curvature condition
\begin{equation}
	W^-_- =0.
\end{equation}

We need only show that the condition
\begin{equation}
	g(R(v,w)v,\xi) =0
\end{equation}
is equivalent to the condition \eqref{eq:765}.
We denote a $(i,j)$-component of the matrix $R^0_-$ by $r_{ij}$.
The function $r_{ij}$ satisfies the equation
\begin{equation}
	r_{ij} = (R\phi_j, e_0 \wedge e_i).
\end{equation}
Since the equation
\begin{equation}
	v_1 \wedge v_2 = -2 \eta_0 \eta_1 \sqrt{-2} \phi_1 -(\eta_0^2 +\eta_1^2)\sqrt{2} \phi_2 + (\eta_0^2 -\eta_1^2) \sqrt{-2} \phi_3
\end{equation}
holds, we have
\begin{align}
	g(R(v_1,v_2)v_1,e_0) &= (R(v_1 \wedge v_2), e_0 \wedge v_1 ) \notag \\
	&= \eta_0^3 \sqrt{2} (-r_{12}-\sqrt{-1}r_{22} +\sqrt{-1}r_{13}-r_{23}) \notag \\
	&\ \ +\eta_0^2 \eta_1 \sqrt{2} (-2 \sqrt{-1}r_{11} +2r_{21} -r_{32} -\sqrt{-1} r_{42} \notag \\
	&\ \ \ \ \ \ +\sqrt{-1} r_{33} -r_{43}) \notag \\
	&\ \ + \eta_0 \eta_1^2 \sqrt{2} (-2 \sqrt{-1}r_{31} +2r_{41} -r_{12} -\sqrt{-1} r_{22} \notag \\
	&\ \ \ \ \ \ -\sqrt{-1} r_{13} +r_{23}) \notag \\
	&\ \ + \eta_1^3 \sqrt{2} (-r_{32}-\sqrt{-1}r_{42} -\sqrt{-1}r_{33}+r_{43}). \label{eq:764}
\end{align}
If the equation \eqref{eq:764} is $0$ for any $(\eta_0, \eta_1) \in \C^2$, then the component of the curvature $R^-_0$ must be of the form
\begin{equation}
 R^-_0 = \begin{pmatrix} \alpha & \delta & -\gamma \\
	\beta & -\gamma & -\delta \\
	\gamma & \beta & \alpha \\
	\delta & -\alpha & \beta \end{pmatrix} 
\end{equation}
with respect to the dual basis of $(\phi_1,\phi_2,\phi_3)$ and the dual basis of $(e_0 \wedge e_1, e_0 \wedge e_2, e_0 \wedge e_3, e_0 \wedge e_4)$, where $\alpha,\beta,\gamma,\delta$ are functions.
Conversely, if the curvature $R^-_0$ is of the form \eqref{eq:765}, then the equation \eqref{eq:764} is 0 for any $(\eta_0, \eta_1) \in \C^2$.

\end{proof}

Next, We show that the functions $\alpha,\beta,\gamma,\delta$ in \eqref{eq:765} are given by non-diagonal components of the Ricci tensor.

\begin{proposition*}
If the component of the curvature $R^0_-$ is of the form \eqref{eq:765}, then the functions $\alpha,\beta,\gamma,\delta$ are
\begin{equation}
	\alpha = - \frac{1}{3 \sqrt{2}} R_{20},\ \ \beta = \frac{1}{3 \sqrt{2}} R_{10},\ \ \gamma = \frac{1}{3 \sqrt{2}} R_{40},\ \ \delta = -\frac{1}{3 \sqrt{2}} R_{30},
\end{equation}
i.e. $R^0_-$ is of the form 
\begin{equation}
	R^-_0 = \frac{1}{3 \sqrt{2}} \begin{pmatrix} -R_{20} & -R_{30} & -R_{40} \\
	R_{10} & -R_{40} & R_{30} \\
	R_{40} & R_{10} & -R_{20} \\
	-R_{30} & R_{20} & R_{10} \end{pmatrix},
\end{equation}
where $R_{ij}$ is the Ricci tensor with respect to the frame $(e_0,e_1,e_2,e_3,e_4)$.
\end{proposition*}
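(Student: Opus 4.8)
The plan is to express both the mixed block $R^0_-$ and the off-diagonal Ricci components $R_{0m}$ $(m=1,2,3,4)$ as explicit contractions of the Riemann tensor $\tilde{R}_{abcd} \eqdef g(R(e_a,e_b)e_c,e_d)$, and then to match them by invoking the first Bianchi identity. First I would record the pairing convention already used in the preceding proposition, namely $(R(X\wedge Y),Z\wedge W)=g(R(X,Y)W,Z)$, so that $r_{ij}=(R\phi_j,e_0\wedge e_i)=(R(e_0\wedge e_i),\phi_j)$ by symmetry of the curvature operator. Alongside the anti-self-dual frame $\phi_1,\phi_2,\phi_3$ I would introduce the self-dual frame $\psi_1=\frac{e_1\wedge e_2+e_3\wedge e_4}{\sqrt{2}}$, $\psi_2=\frac{e_1\wedge e_3-e_2\wedge e_4}{\sqrt{2}}$, $\psi_3=\frac{e_1\wedge e_4+e_2\wedge e_3}{\sqrt{2}}$, and set $s_{ij}\eqdef(R(e_0\wedge e_i),\psi_j)$ for the self-dual mixed block. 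I would then decompose each coordinate $2$-form $e_k\wedge e_m$ into its self-dual and anti-self-dual parts, e.g. $e_1\wedge e_2=\frac{1}{\sqrt{2}}(\psi_1+\phi_1)$.

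Next I would rewrite the off-diagonal Ricci as a contraction of the curvature operator. Using $R_{0m}=\sum_k g(R(e_m,e_k)e_k,e_0)=\sum_{k=1}^{4}(R(e_0\wedge e_k),e_m\wedge e_k)$, substituting the above decompositions turns each $R_{0m}$ into a sum of three $r$-type terms and three $s$-type terms. Plugging in the special form \eqref{eq:765} of $R^0_-$ collapses the $r$-part into multiples of the four functions: for instance $R_{01}=\frac{1}{\sqrt{2}}\left[(s_{21}+r_{21})+(s_{32}+r_{32})+(s_{43}+r_{43})\right]$, where $r_{21}+r_{32}+r_{43}=3\beta$, and analogously the contractions for $R_{02},R_{03},R_{04}$ produce the $r$-sums $-3\alpha$, and the sums giving $\gamma,\delta$.

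The main step, which I expect to carry the actual content, is to show that the self-dual contribution equals the anti-self-dual one, that is $s_{21}+s_{32}+s_{43}=r_{21}+r_{32}+r_{43}$. Forming the difference term by term and translating back into $\tilde{R}$-components via the decompositions, the three summands combine into precisely the cyclic expression $\tilde{R}_{0234}+\tilde{R}_{0342}+\tilde{R}_{0423}$, which vanishes by the first Bianchi identity. This doubling is exactly what produces the numerical factor, giving $R_{01}=3\sqrt{2}\,\beta$ and hence $\beta=\frac{1}{3\sqrt{2}}R_{10}$. Repeating the same bookkeeping for $R_{02},R_{03},R_{04}$ yields $\alpha=-\frac{1}{3\sqrt{2}}R_{20}$, $\gamma=\frac{1}{3\sqrt{2}}R_{40}$, and $\delta=-\frac{1}{3\sqrt{2}}R_{30}$. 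The only delicate part is tracking the signs in the self-dual/anti-self-dual decompositions and confirming that in each case the difference of the self-dual and anti-self-dual blocks is a genuine first-Bianchi cyclic sum; once that is checked, assembling the four entries into the displayed matrix for $R^0_-$ is immediate.
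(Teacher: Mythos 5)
Your proposal is correct, and at its core it is the paper's own computation run in the reverse direction: both arguments rest on exactly the same two ingredients, namely the first Bianchi identity (killing a cyclic sum such as $R_{0134}+R_{0413}+R_{0341}$) and the Ricci contraction $R_{0m}=\sum_{k}(R(e_0\wedge e_k),e_m\wedge e_k)$, in which the $k=0$ and $k=m$ terms vanish. The paper sums the three entries of \eqref{eq:765} equal to $\pm\alpha$, expands them in Riemann components, and regroups into (Ricci part) $+$ (Bianchi part); you instead expand the Ricci component, and the same regrouping appears as your lemma that the self-dual block $s_{ij}$ contributes to $R_{0m}$ exactly as much as the anti-self-dual block $r_{ij}$ — this is what produces your factor of $2$ and hence $3\sqrt{2}$. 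The repackaging is sound, but one point deserves emphasis: the clean termwise identity you display, $s_{21}+s_{32}+s_{43}=r_{21}+r_{32}+r_{43}$ for $R_{01}$, is the easiest case, because there the self-dual and anti-self-dual entries enter with identical signs. For the other components the two sign patterns differ; e.g. one finds $R_{02}=\frac{1}{\sqrt{2}}\left[(-s_{11}+s_{33}-s_{42})+(-r_{11}-r_{33}+r_{42})\right]$, so the quantity that must vanish is not a termwise difference of matching entries but the difference of these two differently signed brackets. It does vanish — it equals $-\sqrt{2}\,(R_{0134}+R_{0341}+R_{0413})$, a genuine cyclic Bianchi sum, and the analogous check works for $R_{03}$ and $R_{04}$ — so the "delicate sign tracking" you defer goes through and your argument reproduces the proposition exactly; it is just slightly more case-dependent than your $m=1$ prototype suggests.
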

\begin{proof}
From the equation \eqref{eq:765}, 
\begin{align}
	\frac{1}{\sqrt{2}}(R_{1201} -R_{3401}) - \frac{1}{\sqrt{2}}(R_{1304} + R_{2404}) +\frac{1}{\sqrt{2}}(R_{1403} -R_{2303}) = 3 \alpha \label{eq:766}
\end{align}
holds.
On the other hand, from the Bianchi identity and $R_{2202}= R_{2000}=0$, we have
\begin{align}
	&\ \ (R_{1201} -R_{3401}) - (R_{1304} + R_{2404}) + (R_{1403} -R_{2303}) \notag \\ 
	&= -(R_{2101} +R_{2303} + R_{2404} ) - (R_{0134} + R_{0413}+R_{0341}) \notag \\
	&= -R_{20}.
\end{align}
Hence, we get
\begin{equation}
	\alpha = - \frac{1}{3 \sqrt{2}} R_{20}.
\end{equation}
The other functions are given in the same way.
Then, we get
\begin{equation}
	\beta = \frac{1}{3 \sqrt{2}} R_{10},\ \ \gamma = \frac{1}{3 \sqrt{2}} R_{40},\ \ \delta = -\frac{1}{3 \sqrt{2}} R_{30}. 
\end{equation}
\end{proof}

\section*{Appendix B}

In this section, we show that in the case of the Ren-Wang twister space, the one form we obtained coincides with the one form constituting the torsional Galilean structure obtained by Gundry\cite{G}.

First, we explain the result of Gundry\cite{G}.
Let $\mathcal{P}_f$ be a holomorphic $\C^3$-bundle over $\mathbb{CP}^1$ given by the patching
\begin{align}
	\hat{T} &= T + \epsilon f(\Omega^A,\lambda), \\
	\hat{\Omega}^A &=\lambda^{-1} \Omega^A,\ \ \ (A=0,1)
\end{align}
where $\lambda$ is an inhomogeneous coordinate on $\mathbb{CP}^1$ and $f$ a holomorphic function.
The manifold $\mathcal{P}_f$ is a 4-dimensional twistor space.
Let $\mathcal{M}^\C$ be the set of twistor lines in $\mathcal{P}_f$.
Gundry constructed a torsional Galilean structure on $\mathcal{M}^\C$.
The 1-form $\theta_1$ constituting the Galilean structure is given by
\begin{equation}
	\theta_1 = ds+ \epsilon \phi_{0,A} dx^{A1^\prime}, \label{eq:6007}
\end{equation}
where $(x^{AA^\prime},s)$ is suitable coordinates on $M$.
The function $\phi_{0,A}$ is given by
\begin{equation}
	\phi_{0,A} = \frac{1}{2 \pi \sqrt{-1}} \oint f(\Omega^A \rvert, \lambda) \lambda^{-1}\ d\lambda,
\end{equation}
where $\rvert$ is the restriction to a twistor line.

Next, we calculate \eqref{eq:6007} in the case of the Ren-Wang twistor space.
In this case, we set
\begin{align}
	T &= \omega_2, \notag \\
	\Omega^A &=\omega_A \ \ \ (A=0,1), \notag \\
	\lambda &= \zeta \notag \\
	\epsilon &=1, \notag \\
	f &= 2 \lambda^{-1} \Omega^0 \Omega^1, \notag \\
	x^{AA^\prime} &= y_{A \overline{A\prime}}, \notag
\end{align}
where an integer $\overline {A} \in \{ 0,1 \}$ satisfies 
\[
	\overline{A} \equiv A+1 \ \ \ (\mathrm{mod}\ 2).
\]
Since the coordinate $s$ is $T \rvert(\lambda=0)$, we have
\begin{equation}
	s= t -y_{00^\prime}y_{11^\prime} - y_{10^\prime}y_{01^\prime}.
\end{equation}
From the direct calculation, we have
\begin{align}
	\phi_{0,A} &= \frac{1}{2\pi \sqrt{-1}} \oint (2y_{\overline{A}1^\prime}\lambda^{-1} + 2y_{\overline{A}1^\prime} \lambda^{-2})\ d\lambda \notag \\
	&= 2y_{\overline{A}1^\prime}.
\end{align}
Thus, we have
\begin{align}
	\theta_1 &= ds+ \epsilon \phi_{0,A} dx^{A1^\prime} \notag \\
	&=dt -y_{11^\prime} dy_{00^\prime} -y_{00^\prime} dy_{11^\prime} -y_{10^\prime} dy_{01^\prime} -y_{01^\prime} dy_{10^\prime} \notag \\
	&\ \ \ + 2y_{11^\prime} dy_{00^\prime} +2 y_{01^\prime} dy_{10^\prime} \notag \\
	&=dt  +y_{11^\prime} dy_{00^\prime}   -y_{10^\prime} dy_{01^\prime} +y_{01^\prime} dy_{10^\prime} -y_{00^\prime} dy_{11^\prime}. 
\end{align}
Therefore, the 1-form $\theta_1$ coincides with the 1-form $\theta$ given in \eqref{eq:6008}.

Graduate Major in Mathematics, School of Science, Tokyo Institute of Technology, Meguro, Tokyo \\
e-mail:teruya.m.aa@m.titech.ac.jp

\end{document}